\theoremstyle{plain}
\newtheorem{theorem}{Theorem}[section]
\newtheorem{proposition}[theorem]{Proposition}
\newtheorem{lemma}[theorem]{Lemma}
\newtheorem{corollary}[theorem]{Corollary}
\theoremstyle{definition}
\newtheorem{definition}[theorem]{Definition}
\newtheorem{notation}[theorem]{Notation}
\newtheorem{example}[theorem]{Example}
\newtheorem{remark}[theorem]{Remark}
\begin{document}
	\def\N{\mathbb{N}}
	\def\Z{\mathbb{Z}}
	\def\R{\mathbb{R}}
	\def\C{\mathbb{C}}
	\def\M{\mathcal{M}}
	
	\title[An Archimedean Vector Lattice Functional Calculus]{
		An Archimedean Vector Lattice Functional Calculus For Semicontinuous Positively Homogeneous Functions}
	\author{C. Schwanke}
	\address{Department of Mathematics and Applied Mathematics, University of Pretoria, Private Bag X20, Hatfield 0028, South Africa}
	\email{cmschwanke26@gmail.com}
	\date{\today}
	\subjclass[2020]{46A40}
	\keywords{Archimedean vector lattice, functional calculus, semicontinuous positively homogeneous function}
	
	\begin{abstract}
		We develop a functional calculus on Archimedean vector lattices for semicontinuous positively homogeneous real-valued functions defined on $\R^n$ which are bounded on the unit sphere. It is further shown that this semicontinuous Archimedean vector lattice functional calculus extends the existing continuous Archimedean vector lattice functional calculus by Buskes, de Pagter, and van Rooij. We further utilize saddle representations of continuous positively homogeneous functions to provide concrete formulas, for functions abstractly defined via the continuous functional calculus, which are completely in terms of vector lattice operations. Finally, we provide some examples to illustrate the utility of the theory presented.
	\end{abstract}
	
	\maketitle

	\section{Introduction}\label{S:intro}
	
	The idea of naturally defining expressions of the form $h(f_1,\dots,f_n)$, where $h$ is a real-valued function defined on $\R^n$, and $f_1,\dots,f_n$ are elements of a vector lattice, is called functional calculus. The study of functional calculus on Archimedean vector lattices dates back to the 1991 paper \cite{BusdPvR} by Buskes, de Pagter, and van Rooij. Given a uniformly complete Archimedean vector lattice $E$, these authors defined $h(f_1,\dots,f_n)$ for any continuous positively homogeneous function $h\colon\R^n\to\R$ and any $f_1,\dots,f_n\in E$. Functional calculus allows for useful expressions in functional analysis, such as the root mean square $\frac{1}{\sqrt{n}}\sqrt{\sum_{k=1}^nf_k^2}$ and the geometric mean $\sqrt[n]{\prod_{k=1}^{n}|f_k|}$, to be defined in uniformly complete Archimedean vector lattices. This achievement  has considerably enhanced the development of vector lattice theory in numerous directions. Indeed, functional calculus with respect to the scalar multiple of the square mean $\sqrt{x^2+y^2}$ in two variables was utilized in \cite{BusSch2} to construct a tensor product in the category of Archimedean complex vector lattices. Functional calculus also led to the extension of the classical Cauchy-Schwarz, H\"older, and Minkowski inequalities in \cite{BusSch3}, and was used to characterize bounded orthogonally additive polynomials in \cite{BusSch4, Kusa, Sch}.
	
	While the introduction of functional calculus in Archimedean vector lattices has proven to be tremendously beneficial, it is not a theory without its challenges. One limitation of the theory is its level of abstractness; that is, if $h\colon\R^n\to\R$ is continuous and positively homogeneous, $E$ is a uniformly complete Archimedean vector lattice, and $f_1,\dots,f_n\in E$, then while we know that $h(f_1,\dots,f_n)$ defined by the functional calculus of \cite{BusdPvR} exists, it is typically not known what $h(f_1,\dots,f_n)$ ``looks like" in terms of vector lattice operations. Progress towards providing explicit formulas for these expressions were made in \cite{BusSch, Kus} for convex and concave continuous positively homogeneous functions, but extending such formulas to more general continuous positively homogeneous functions has seemed more difficult.
	
	However, certain representations of positively homogeneous functions presented in the literature on nonsmooth analysis and optimization theory represent continuous positively homogeneous functions purely in terms of vector lattice operations (see e.g. \cite{DR, GorTraf}). These results provide a natural avenue for obtaining explicit formulas for \textit{every} real-valued continuous positively homogeneous function defined on $\R^n$. We refer to these representations as \textit{inf-sublinear representations} and \textit{sup-superlinear} representations, see Definition~\ref{D: inf-sublinear and sup-superlinear}.
	
	Interestingly, inf-sublinear and sup-superlinear representations exist not only for all continuous real-valued positively homogeneous functions on $\R^n$ but all \textit{semicontinuous} real-valued positively homogeneous functions on $\R^n$ which are bounded on the unit sphere. This observation enables us to provide a theory of functional calculus for semicontinuous real-valued positively homogeneous functions on $\R^n$ which are bounded on the unit sphere in this paper. Our functional calculus for semicontinuous positively homogeneous functions extends the original Archimedean vector lattice functional calculus in \cite{BusdPvR} and provides explicit formulas solely in terms of vector lattice operations.

	We build our semicontinuous Archimedean vector lattice functional calculus in four steps. First, we construct a Dedekind complete vector lattice functional calculus for sublinear maps and superlinear maps in Section~\ref{S: DC phi fun cal}. In Section~\ref{S: Arch phi fun cal} we then, when possible, extend this functional calculus to a broader range of Archimedean vector lattices. Our results from Section~\ref{S: DC phi fun cal} enable us to exploit the aforementioned inf-sublinear and sup-superlinear representations to develop in Section~\ref{S: h fun cal} a functional calculus for real-valued semicontinuous positively homogeneous functions on $\R^n$, which are bounded on the unit sphere, in any Dedekind complete vector lattice. In Section~\ref{S: non DC h fun cal}, we extend, when feasible, the functional calculus in Section~\ref{S: h fun cal} to certain Archimedean vector lattices which may not be Dedekind complete. We then prove in Section~\ref{S: comparisonto'91} that our semicontinuous functional calculus agrees with the original Archimedean vector lattice functional calculus for continuous positively homogeneous functions introduced in \cite{BusdPvR}, and we provide alternate formulas for this continuous version of functional calculus on Dedekind complete vector lattices in terms of saddle representations (as defined in  \cite[Section~2]{GorTraf}) in Section~\ref{S: saddlerepresnentations}. Finally, in Section~\ref{S: examples} we provide some specific examples which further illustrate how the definitions and results of this paper work.
	
	We proceed with some preliminaries.
	
	\section{Preliminaries}\label{S: Prelims}
	
	The reader is referred to the standard texts (e.g. \cite{AB,LuxZan1,Zan2}) for any unexplained terminology or basic theory regarding vector lattices. Throughout, $\mathbb{R}$ is used for the real numbers, and the set of strictly positive integers is denoted by $\mathbb{N}$.
	
	We remind the reader of the functions which are central to the research in this paper.
	
	\begin{definition}
		Given $n\in\N$, a function $h\colon\R^n\to\R$ is called \textit{positively homogeneous} if for all $\lambda\in[0,\infty)$ and every $x_1,\dots,x_n\in\R$,
		\[
		h(\lambda x_1,\dots,\lambda x_n)=\lambda h(x_1,\dots,x_n).
		\]
	\end{definition}
	
	Next we provide some notation that was used in \cite{BusdPvR} to develop the original Archimedean vector lattice functional calculus.
	
	\begin{notation}
		Given $n\in\N$, an Archimedean vector lattice $E$, and $f_1,\dots,f_n\in E$, we denote by $\langle f_1,\dots,f_n\rangle$ the vector sublattice of $E$ generated by $\{f_1,\dots,f_n\}$, and the set of all real-valued vector lattice homomorphisms defined on $E$ is written as $H(E)$.
	\end{notation}
	
	We next present the Archimedean vector lattice functional calculus developed by Buskes, de Pagter, and van Rooij in \cite{BusdPvR}.
	
	\begin{definition}\cite[Definition~3.1]{BusdPvR}
		Fix $n\in\N$. Consider an Archimedean vector lattice $E$, and let $f_1,\dots,f_n\in E$. Suppose $h\colon\R^n\to\R$ is a continuous positively homogeneous function. If there exists $g\in E$ such that for every $\omega\in H(\langle f_1,\dots,f_n,g\rangle)$,
		\[
		h\bigl(\omega(f_1),\dots,\omega(f_n)\bigr)=\omega(g),
		\]
		then we define $h(f_1,\dots,f_n):=g$. We refer to this definition as the \textit{continuous Archimedean vector lattice functional calculus} in this paper.
	\end{definition}
	
	As we develop an Archimedean vector lattice functional calculus for semicontinuous positively homogeneous functions bounded on the unit sphere, we will frequently make use of the following convenient notation.
	
	\begin{notation}
		Given $n\in\N$, $\bar{x}\in\R^n$, and $i\in\{1,\dots,n\}$, we as usual denote the $i$th coordinate of $\bar{x}$ by $x_i$ throughout this paper. Thus we have $\bar{x}=(x_1,\dots,x_n)$ for all $\bar{x}\in\R^n$. As usual, we also define
		\[
		\|\bar{x}\|:=\sqrt{\sum_{i=1}^{n}x_i^2}\qquad (\bar{x}\in\R^n).
		\]
		Moreover, for $k\in\{1,\dots,n\}$, we denote the element in $\R^n$ with $1$ in the $k$th coordinate and $0$ in all other coordinates by $\overline{e^{(k)}}$. We thus have that $\overline{e^{(k)}}=(e_1^{(k)},\dots,e_n^{(k)})$, where $e^{(k)}_k=1$, and for all $i\in\{1,\dots,n\}\setminus\{k\}$, $e^{(k)}_i=0$. We also denote the zero element of $\R^n$ by $\bar{0}$ throughout.
	\end{notation}
	
	\section{A Dedekind Complete Vector Lattice Functional Calculus for Sublinear and Superlinear Maps}\label{S: DC phi fun cal}
	
	Fix $n\in\N$, and let $\phi\colon\R^n\to\R$ be a sublinear map. By the Minkowski duality isomorphism (see e.g. \cite{H-U&L}), $\phi$ uniquely corresponds to its subdifferential $\underline{\partial}\phi(\bar{0})$ at $\bar{0}$, defined as
	\[
	\underline{\partial}\phi(\bar{0}):=\left\{\bar{a}\in\R^n\ :\ \text{for all}\ \bar{x}\in\R^n,\ \sum_{i=1}^{n}a_ix_i\leq \phi(\bar{x})\right\},
	\]
	which is a nonempty, convex, and compact set. Here we have
	\begin{equation}\label{eq: phi max}
		\phi(\bar{x})=\underset{\bar{a}\in\underline{\partial}\phi(\bar{0})}{\max}\ \sum_{i=1}^{n}a_ix_i\qquad (\bar{x}\in\R^n).
	\end{equation}
	
	Likewise, if $\psi\colon\R^n\to\R$ is a superlinear map, then $\psi$ is uniquely connected to its superdifferential $\overline{\partial}\psi(\bar{0})$ at $\bar{0}$, defined by
	\[
	\overline{\partial}\psi(\bar{0}):=\left\{\bar{a}\in\R^n\ :\ \text{for all}\ \bar{x}\in\R^n,\ \sum_{i=1}^{n}a_ix_i\geq \psi(\bar{x})\right\},
	\]
	which also is a nonempty, convex, and compact set. Moreover, we have
	\begin{equation}\label{eq: psi min}
		\psi(\bar{x})=\underset{\bar{a}\in\overline{\partial}\psi(\bar{0})}{\min}\ \sum_{i=1}^{n}a_ix_i\qquad (\bar{x}\in\R^n).
	\end{equation}
	
\begin{remark}
Let $n\in\N$, and consider a sublinear map $\phi\colon\R^n\to\R$. One can show, using \eqref{eq: phi max} and the compactness of $\underline{\partial}\phi(\bar{0})$, that there exists $M>0$ such that for all $\bar{x}\in\R^n$ with $\|\bar{x}\|=1$, $|\phi(\bar{x})|\leq M$. Exploiting this result and using hyperspherical coordinates, it is then readily proved that $\phi$ is continuous at $\bar{0}$. From here, one can use the sublinearity of $\phi$ to prove that $\phi$ is continuous on $\R^n$. Consequently, if $\psi\colon\R^n\to\R$ is a superlinear map, then $\psi$ is continuous on $\R^n$ as well.
\end{remark}
	
	Next consider a Dedekind complete vector lattice $E$, fix $n\in\N$, put $f_1,\dots,f_n\in E$, let $\phi\colon\R^n\to\R$ be a sublinear map, and let $\psi\colon\R^n\to\R$ be a superlinear map. We prove that the expressions
	\[
	\underset{\bar{a}\in\underline{\partial}\phi(\bar{0})}{\sup}\ \sum_{i=1}^{n}a_if_i
	\]
	and
	\[
	\underset{\bar{a}\in\overline{\partial}\psi(\bar{0})}{\inf}\ \sum_{i=1}^{n}a_if_i,
	\]
	related to \eqref{eq: phi max} and \eqref{eq: psi min}, always exist in $E$.
	
	\begin{proposition}\label{P: dc implies phic}
		Fix $n\in\N$. Let $E$ be a Dedekind complete vector lattice, and put $f_1,\dots,f_n\in E$.
		\begin{itemize}
			\item[(i)] If $\phi\colon\R^n\to\R$ is a sublinear map, then $\underset{\bar{a}\in\underline{\partial}\phi(\bar{0})}{\sup}\ \sum_{k=1}^{n}a_kf_k$ exists in $E$.
			\item[(ii)] If $\psi\colon\R^n\to\R$ is a superlinear map, then $\underset{\bar{a}\in\overline{\partial}\psi(\bar{0})}{\inf}\ \sum_{k=1}^{n}a_kf_k$ exists in $E$.
		\end{itemize}
	\end{proposition}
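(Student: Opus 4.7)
The plan is to show that in each case the set in question is order bounded in $E$, so that Dedekind completeness furnishes the supremum (respectively infimum).

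For part (i), recall that the author has already pointed out (as a consequence of Minkowski duality) that $\underline{\partial}\phi(\bar{0})$ is a compact subset of $\R^n$. Hence it is norm bounded, so there exists $M>0$ such that $|a_k|\leq M$ for every $\bar{a}\in\underline{\partial}\phi(\bar{0})$ and every $k\in\{1,\dots,n\}$. Using the basic vector lattice inequality $\pm a_k f_k \leq |a_k|\,|f_k|\leq M|f_k|$, I would then conclude that
\[
\sum_{k=1}^{n}a_k f_k \leq M\sum_{k=1}^{n}|f_k|
\]
for every $\bar{a}\in\underline{\partial}\phi(\bar{0})$. Thus the set $\bigl\{\sum_{k=1}^{n}a_k f_k:\bar{a}\in\underline{\partial}\phi(\bar{0})\bigr\}$ is bounded above in $E$, and since $E$ is Dedekind complete, its supremum exists.

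For part (ii) I would run the identical argument with $\psi$ and $\overline{\partial}\psi(\bar{0})$, obtaining a lower bound of the form $-M'\sum_{k=1}^n|f_k|$ and invoking Dedekind completeness to get the infimum. Alternatively, observing that $-\psi$ is sublinear with $\underline{\partial}(-\psi)(\bar{0})=-\overline{\partial}\psi(\bar{0})$ reduces (ii) to (i) at once via the identity $\inf S=-\sup(-S)$.

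The proof is essentially a one-line consequence of the compactness noted just after equations \eqref{eq: phi max} and \eqref{eq: psi min}, so there is no substantial obstacle; the only point to verify carefully is that compactness of $\underline{\partial}\phi(\bar{0})$ in $\R^n$ really does translate to order boundedness in $E$, which it does because a uniform coordinatewise bound on $\bar{a}$ yields a single majorant $M\sum_{k=1}^n|f_k|\in E$ independent of $\bar{a}$.
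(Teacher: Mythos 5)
Your proposal is correct and follows essentially the same route as the paper: establish that $\bigl\{\sum_{k=1}^{n}a_kf_k : \bar{a}\in\underline{\partial}\phi(\bar{0})\bigr\}$ is order bounded via a uniform bound on the coordinates $a_k$, then invoke Dedekind completeness. The only cosmetic difference is that you obtain the coordinate bound from the compactness of $\underline{\partial}\phi(\bar{0})$, whereas the paper extracts the explicit majorant $\phi\bigl(\overline{e^{(k)}}\bigr)\vee\phi\bigl(-\overline{e^{(k)}}\bigr)$ directly from the defining inequality of the subdifferential evaluated at $\pm\overline{e^{(k)}}$.
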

	
	\begin{proof}
		We only prove part (i) since both parts are proven in a similar fashion. To this end, let $\phi\colon\R^n\to\R$ be a sublinear map, let $\bar{a}\in\underline{\partial}\phi(\bar{0})$, and let $k\in\{1,\dots,n\}$.
		
		If $a_k\geq 0$, we have
		\begin{align*}
			|a_k|&=a_1\cdot 0+\cdots+a_{k-1}\cdot 0+a_k\cdot 1+a_{k+1}\cdot 0+\cdots+a_n\cdot 0\\
			&\leq\underset{\bar{a}\in\underline{\partial}\phi(\bar{0})}{\sup}\ \sum_{i=1}^{n}a_ie^{(k)}_i\\
			&=\phi\left(\overline{e^{(k)}}\right)\\
			&\leq\phi\left(\overline{e^{(k)}}\right)\vee\phi\left(-\overline{e^{(k)}}\right).
		\end{align*}
		
		On the other hand, if $a_k<0$, then
		\begin{align*}
			|a_k|&=a_1\cdot 0+\cdots+a_{k-1}\cdot 0+a_k(-1)+a_{k+1}\cdot 0+\cdots+a_n\cdot 0\\
			&\leq\underset{\bar{a}\in\underline{\partial}\phi(\bar{0})}{\sup}\ \sum_{i=1}^{n}a_i(-e^{(k)}_i)\\
			&=\phi\left(-\overline{e^{(k)}}\right)\\
		&\leq\phi\left(\overline{e^{(k)}}\right)\vee\phi\left(-\overline{e^{(k)}}\right).
		\end{align*}
		Hence we have
		\begin{equation}\label{eq: a<phi v phi}
			|a_k|\leq\phi\left(\overline{e^{(k)}}\right)\vee\phi\left(-\overline{e^{(k)}}\right).
		\end{equation}
		Utilizing \eqref{eq: a<phi v phi}, we obtain
		\begin{align*}
			\sum_{i=1}^{n}a_if_i&\leq\sum_{i=1}^{n}|a_i||f_i|\\
			&\leq\left(\phi\left(\overline{e^{(k)}}\right)\vee\phi\left(-\overline{e^{(k)}}\right)\right)\sum_{i=1}^{n}|f_i|.
		\end{align*}
		Thus the nonempty set $\left\{\sum_{i=1}^{n}a_if_i\ :\ \bar{a}\in\underline{\partial}\phi(\bar{0})\right\}$ is bounded above. Since $E$ is Dedekind complete, we conclude that $\underset{\bar{a}\in\underline{\partial}\phi(\bar{0})}{\sup}\ \sum_{i=1}^{n}a_if_i$ exists in $E$.
	\end{proof}
	
	With Proposition~\ref{P: dc implies phic} in place, we make use of the expressions \eqref{eq: phi max} and \eqref{eq: psi min} to naturally define a functional calculus for sublinear and superlinear maps on Dedekind complete vector lattices.
	
	\begin{definition}\label{D: DC sblin and splin fun cal}
		Let $E$ be a Dedekind complete vector lattice, fix $n\in\N$, and put $f_1,\dots,f_n\in E$.
		\begin{itemize}
			\item[(i)] Given a sublinear map $\phi\colon\R^n\to\R$, we define
			\[
				\phi(f_1,\dots,f_n):=\underset{\bar{a}\in\underline{\partial}\phi(\bar{0})}{\sup}\ \sum_{i=1}^{n}a_if_i.
			\]
			\item[(ii)] For a superlinear map $\psi\colon\R^n\to\R$, we write
			\[
				\psi(f_1,\dots,f_n):=\underset{\bar{a}\in\overline{\partial}\psi(\bar{0})}{\inf}\ \sum_{i=1}^{n}a_if_i.
			\]
		\end{itemize}
	\end{definition}
	
	Our next theorem illustrates a convenient interchange property between sublinear (and superlinear) maps defined in Dedekind complete vector lattices via Definition~\ref{D: DC sblin and splin fun cal} and vector lattice homomorphisms. Theorem~\ref{T: phi hom interchange} is central to this paper, as it will be employed several times throughout the manuscript.
	
	\begin{theorem}\label{T: phi hom interchange}
		Fix $n\in\N$. Let $E$ and $F$ be Dedekind complete vector lattices, let $T\colon E\to F$ be a vector lattice homomorphism, and put $f_1,\dots,f_n\in E$. 
		\begin{itemize}
			\item[(i)] If $\phi\colon\R^n\to\R$ is a sublinear map, then
			\[
			T\bigl(\phi(f_1,\dots,f_n)\bigr)=\phi\bigl(T(f_1),\dots,T(f_n)\bigr).
			\]
			\item[(ii)] If $\psi\colon\R^n\to\R$ is a superlinear map, then
			\[
			T\bigl(\psi(f_1,\dots,f_n)\bigr)=\psi\bigl(T(f_1),\dots,T(f_n)\bigr).
			\]
		\end{itemize}
	\end{theorem}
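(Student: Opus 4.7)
The strategy is to prove each part as a two-sided inequality; I focus on part~(i), since part~(ii) follows by the dual argument with suprema replaced by infima and $\underline{\partial}\phi(\bar{0})$ replaced by $\overline{\partial}\psi(\bar{0})$. Write $A := \underline{\partial}\phi(\bar{0})$ and $e := \sum_{i=1}^{n}|f_i|$, so that $T(e)=\sum_{i=1}^{n}|T(f_i)|$ since $T$ is a lattice homomorphism.

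One direction is routine: for every $\bar{a}\in A$, Definition~\ref{D: DC sblin and splin fun cal}(i) gives $\sum_{i=1}^{n} a_i f_i \leq \phi(f_1,\dots,f_n)$; applying the positive operator $T$ and then taking the supremum over $A$ in $F$ (which exists by Proposition~\ref{P: dc implies phic}) yields $\phi\bigl(T(f_1),\dots,T(f_n)\bigr)\leq T\bigl(\phi(f_1,\dots,f_n)\bigr)$.

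For the reverse inequality I would exploit the compactness of $A\subseteq\R^n$ via a finite $\epsilon$-net argument in the supremum norm. Given $\epsilon>0$, choose $\bar{a}^{(1)},\dots,\bar{a}^{(N)}\in A$ so that every $\bar{a}\in A$ lies within sup-norm distance $\epsilon$ of some $\bar{a}^{(k)}$. The bridge from the finite-dimensional approximation to the lattice $E$ is the pointwise estimate
\[
\sum_{i=1}^{n} a_i f_i - \sum_{i=1}^{n} a_i^{(k)} f_i \;\leq\; \sum_{i=1}^{n}|a_i - a_i^{(k)}|\,|f_i| \;\leq\; \epsilon\,e.
\]
Taking the supremum over $\bar{a}\in A$ gives $\phi(f_1,\dots,f_n)\leq\bigvee_{k=1}^{N}\sum_{i=1}^{n} a_i^{(k)} f_i + \epsilon\,e$. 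Applying $T$, which is linear and preserves finite lattice operations, and using that each $\bar{a}^{(k)}\in A$, yields
\[
T\bigl(\phi(f_1,\dots,f_n)\bigr) \;\leq\; \bigvee_{k=1}^{N}\sum_{i=1}^{n} a_i^{(k)} T(f_i) + \epsilon\,T(e) \;\leq\; \phi\bigl(T(f_1),\dots,T(f_n)\bigr) + \epsilon\,T(e).
\]
Since $\epsilon>0$ is arbitrary, $T(e)\geq 0$, and $F$ is Archimedean (being Dedekind complete), this forces $T\bigl(\phi(f_1,\dots,f_n)\bigr) \leq \phi\bigl(T(f_1),\dots,T(f_n)\bigr)$, finishing part~(i).

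The main technical point is the $\epsilon$-net transfer inequality above; it is what permits the passage from the (possibly uncountable) supremum defining $\phi(f_1,\dots,f_n)$ to a finite lattice join that the homomorphism $T$ actually respects. This is the only place where the compactness of $\underline{\partial}\phi(\bar{0})$ enters in an essential way, and it is also what makes the absence of order continuity of $T$ immaterial to the argument.
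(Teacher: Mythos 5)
Your proof is correct and follows essentially the same route as the paper's: both arguments hinge on extracting a finite sup-norm $\epsilon$-net from the compact subdifferential $\underline{\partial}\phi(\bar{0})$, transferring the estimate $\sum_i|a_i-a_i^{(k)}||f_i|\leq\epsilon\sum_i|f_i|$ to bound the supremum by a finite join that $T$ preserves, and finishing with the Archimedean property of $F$. The only cosmetic difference is that you split the identity into an easy inequality (via positivity of $T$) and a hard one, whereas the paper bounds both $\phi(f_1,\dots,f_n)$ and $\phi\bigl(T(f_1),\dots,T(f_n)\bigr)$ by the same finite join and applies the triangle inequality.
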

	
	\begin{proof}
		We only prove part (i), noting that part (ii) is handled similarly. To this end, assume that $\phi\colon\R^n\to\R$ is a sublinear map. By definition, we have
		\[
		\phi(f_1,\dots,f_n)=\underset{\bar{a}\in\underline{\partial}\phi(\bar{0})}{\sup}\sum_{i=1}^{n}a_if_i.
		\]
		Next, let $m\in\N$ be arbitrary. For each  $\bar{a}\in\underline{\partial}\phi(\bar{0})$, define
		\[
		O_{\bar{a}, \frac{1}{2mn}}:=\left\{\bar{x}\in\R^n\ :\ \max\bigl\{|x_i-a_i|\ :\ i\in\{1,\dots, n\}\bigr\}<\dfrac{1}{2mn}\right\}.
		\]
		Observe that $\mathcal{O}:=\left\{O_{\bar{a},\frac{1}{2mn}}\ :\ \bar{a}\in\underline{\partial}\phi(\bar{0})\right\}$ is an open cover of $\underline{\partial}\phi(\bar{0})$, and, since $\underline{\partial}\phi(\bar{0})$ is compact, $\mathcal{O}$ contains a finite subcover 
		\[
		\mathcal{O}_0:=\left\{O_{\overline{a^1},\frac{1}{2mn}},\dots,O_{\overline{a^k},\frac{1}{2mn}}\right\},
		\]
		with $\overline{a^1},\dots,\overline{a^k}\in\underline{\partial}\phi(\bar{0})$. Thus, fixing an arbitrary $\bar{a}\in\underline{\partial}\phi(\bar{0})$, there exists $j_0\in\{1,\dots,k\}$ for which $\bar{a}\in O_{\overline{a^{j_0}},\frac{1}{2mn}}$. Therefore, we have
		\[
			\max\bigl\{|a_i-a^{j_0}_i|\ :\ i\in\{1,\dots,n\}\bigr\}<\dfrac{1}{2mn}.	
		\]
	Hence we obtain
		\begin{align*}
			\sum_{i=1}^{n}a_if_i-\bigvee_{j=1}^k\sum_{i=1}^{n}a^j_if_i&\leq \sum_{i=1}^{n}a_if_i-\sum_{i=1}^{n}a^{j_0}_if_i\\
			&=\sum_{i=1}^{n}(a_i-a^{j_0}_i)f_i\\
			&\leq\sum_{i=1}^{n}|a_i-a^{j_0}_i||f_i|\\
			&\leq\dfrac{1}{2m}\sum_{i=1}^{n}|f_i|.
		\end{align*}
		It follows that
		\[
		\left|\phi(f_1,\dots,f_n)-\bigvee_{j=1}^k\sum_{i=1}^{n}a^j_if_i\right|=\phi(f_1,\dots,f_n)-\bigvee_{j=1}^k\sum_{i=1}^{n}a^j_if_i\leq\dfrac{1}{2m}\sum_{i=1}^{n}|f_i|.
		\]
		Similarly, we have
		\[
		\left|\phi\bigl(T(f_1),\dots,T(f_n)\bigr)-\bigvee_{j=1}^k\sum_{i=1}^{n}a^j_iT(f_i)\right|\leq\dfrac{1}{2m}\sum_{i=1}^{n}|T\bigl(f_i\bigr)|
		\]
		in $F$. Therefore, recalling that $T$ is a vector lattice homomorphism, we get
		\begin{align*}
			&\left|T\bigl(\phi(f_1,\dots,f_n)\bigr)-\phi\bigl( T(f_1),\dots,T(f_n)\bigr)\right|\\
			&\ \leq\left|T\bigl(\phi(f_1,\dots,f_n)\bigr)-T\left(\bigvee_{j=1}^k\sum_{i=1}^{n}a^j_if_i\right)\right|+\left|\phi\bigl( T(f_1),\dots,T(f_n)\bigr)-T\left(\bigvee_{j=1}^k\sum_{i=1}^{n}a^j_if_i\right)\right|\\
			&\ =\left|T\left(\phi(f_1,\dots,f_n)-\bigvee_{j=1}^k\sum_{i=1}^{n}a^j_if_i\right)\right|+\left|\phi\bigl( T(f_1),\dots,T(f_n)\bigr)-\bigvee_{j=1}^k\sum_{i=1}^{n}a^j_iT(f_i)\right|\\
			&\leq T\left(\dfrac{1}{2m}\sum_{i=1}^{n}|f_i|\right)+\dfrac{1}{2m}\sum_{i=1}^{n}|T\bigl(f_i\bigr)|\\
			&\ =\dfrac{1}{m}\sum_{i=1}^{n}T\bigl(|f_i|\bigr).
		\end{align*}
		Since $m\in\N$ was arbitrary and $F$ is Archimedean, we conclude that
		\[
		T\bigl(\phi(f_1,\dots,f_n)\bigr)=\phi\bigl( T(f_1),\dots,T(f_n)\bigr).
		\]
	\end{proof}

	When we are working with vector sublattices, the following notation often proves to be useful.
	
	\begin{notation}
		Given $n\in\N$, a nonempty subset $A\subseteq\R^n$, an Archimedean vector lattice $E$, a vector sublattice $E_0$ of $E$, and a function $f\colon A\to E_0$, we write $E_0-\underset{\bar{a}\in A}{\inf} f(\bar{a})$ when $\underset{\bar{a}\in A}{\inf} f(\bar{a})$ is taken in $E_0$ (and exists in $E_0$) and $E-\underset{\bar{a}\in A}{\inf} f(\bar{a})$ when $\underset{\bar{a}\in A}{\inf} f(\bar{a})$ is taken in $E$ (and exists in $E$).
	\end{notation}
	
	The final result of this section is a corollary to Theorem~\ref{T: phi hom interchange} and states that our functional calculus can be calculated in different vector sublattices without changing the result. The proof follows immediately from using Proposition~\ref{T: phi hom interchange} with respect to the inclusion map $T\colon E_0\to E$.
	
	\begin{corollary}\label{C: DC phi subspace invariant}
		Fix $n\in\N$, let $E$ be a Dedekind complete vector lattice, and suppose $E_0$ is a Dedekind complete vector sublattice of $E$. Put $f_1,\dots,f_n\in E_0$.
		\begin{itemize}
			\item[(i)] If $\phi\colon\R^n\to\R$ is a sublinear map, then
			\[
			E_0-\underset{\bar{a}\in\underline{\partial}\phi(\bar{0})}{\sup}\ \sum_{i=1}^{n}a_if_i=E-\underset{\bar{a}\in\underline{\partial}\phi(\bar{0})}{\sup}\ \sum_{i=1}^{n}a_if_i.
			\]
			\item[(ii)] If $\psi\colon\R^n\to\R$ is a superlinear map, then
			\[
			E_0-\underset{\bar{a}\in\overline{\partial}\psi(\bar{0})}{\inf}\ \sum_{i=1}^{n}a_if_i=E-\underset{\bar{a}\in\overline{\partial}\psi(\bar{0})}{\inf}\ \sum_{i=1}^{n}a_if_i.
			\]
		\end{itemize}
		In other words, the value of $\phi(f_1,\dots,f_n)$ ($\psi(f_1,\dots,f_n)$) remains the same whether its associated supremum (infimum) is calculated in $E_0$ or $E$. 
	\end{corollary}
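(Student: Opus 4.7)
The plan is to recognize that the inclusion map $T\colon E_0\to E$, defined by $T(g):=g$ for every $g\in E_0$, is a vector lattice homomorphism between two Dedekind complete vector lattices, and then to apply Theorem~\ref{T: phi hom interchange} directly. First I would verify the ingredients: both $E$ and $E_0$ are Dedekind complete by hypothesis, so Proposition~\ref{P: dc implies phic} guarantees that the supremum in (i) (respectively the infimum in (ii)) exists in \emph{each} of $E_0$ and $E$, making both sides of the claimed equality well-defined elements.

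For part (i), I would invoke Theorem~\ref{T: phi hom interchange}(i) with the inclusion map $T$ to obtain
\[
T\bigl(\phi(f_1,\dots,f_n)\bigr)=\phi\bigl(T(f_1),\dots,T(f_n)\bigr).
\]
Unpacking this via Definition~\ref{D: DC sblin and splin fun cal}, the left-hand side is the image under $T$ of the supremum computed in $E_0$, which by definition of the inclusion is the same element viewed inside $E$, that is, $E_0-\underset{\bar{a}\in\underline{\partial}\phi(\bar{0})}{\sup}\ \sum_{i=1}^{n}a_if_i$. The right-hand side, again by Definition~\ref{D: DC sblin and splin fun cal} applied in $E$, is exactly $E-\underset{\bar{a}\in\underline{\partial}\phi(\bar{0})}{\sup}\ \sum_{i=1}^{n}a_if_i$, since $T(f_i)=f_i$ for each $i$. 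Equating the two expressions yields the claim. Part (ii) is handled by the identical argument, using Theorem~\ref{T: phi hom interchange}(ii) in place of (i).

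There is no real obstacle here: the entire content of the corollary is packaged into Theorem~\ref{T: phi hom interchange}. The only point worth flagging carefully is that Theorem~\ref{T: phi hom interchange} requires the codomain of $T$ to be Dedekind complete (so that $\phi\bigl(T(f_1),\dots,T(f_n)\bigr)$ is defined via Definition~\ref{D: DC sblin and splin fun cal}), which is why the hypothesis that $E$ is Dedekind complete is used; and Dedekind completeness of $E_0$ is needed so that $\phi(f_1,\dots,f_n)$ is defined in $E_0$ in the first place. With both completeness assumptions in hand, the proof reduces to a one-line invocation of the interchange theorem.
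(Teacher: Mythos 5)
Your proof is correct and matches the paper's own argument exactly: the paper likewise notes that the corollary follows immediately from applying Theorem~\ref{T: phi hom interchange} to the inclusion map $T\colon E_0\to E$. Your additional care in checking that both suprema (infima) exist via Proposition~\ref{P: dc implies phic} is a sensible elaboration of the same one-line idea.
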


	\section{An Archimedean Vector Lattice Functional Calculus for Sublinear and Superlinear Maps}\label{S: Arch phi fun cal}
	
	In this section we extend, when possible, the Dedekind complete functional calculus for sublinear and superlinear maps (Definition~\ref{D: DC sblin and splin fun cal}) to more general Archimedean vector lattices. We begin with a motivating example.
	
\begin{example}\label{E: smc not DC}
An Archimedean vector lattice $E$ is said to be  \textit{square mean closed} (see \cite[Secton~2.2]{Az}) if for every $f,g\in E$,
\[
\underset{\theta\in[0,2\pi]}{\sup}\{(\cos\theta)f+(\sin\theta)g\}
\]
exists in $E$. There exist Archimedean vector lattices which are square mean closed but not Dedekind complete; and example is the space $\text{Lip}[0,1]$ of all Lipschitz functions defined on $[0,1]$ (see \cite[Remark~4]{Az}). However, consider the sublinear map $s\colon\R^2\to\R$ defined by
\[
s(x,y):=\sqrt{x^2+y^2}.
\]
Let $E$ be any square mean closed Archimedean vector lattice, and fix $f,g\in E$. Recalling that $\underline{\partial}s(\bar{0})=\{(a,b)\in\R^2\ :\ a^2+b^2\leq 1\}$, it is readily proven that
\[
\underset{(a,b)\in \underline{\partial}s(\bar{0})}{\sup}(af+bg)
\]
exists in $E$ and that
\[
\underset{(a,b)\in \underline{\partial}s(\bar{0})}{\sup}(af+bg)=\underset{\theta\in[0,2\pi]}{\sup}\bigl((\cos\theta)f+(\sin\theta)g\bigr).
\]
Hence in particular we can define $s(f,g)$, for any $f,g\in \text{Lip}[0,1]$, in the same manner as in Definition~\ref{D: DC sblin and splin fun cal}, even though $\text{Lip}[0,1]$ is not Dedekind complete.
\end{example}

We expand on the idea presented in Example~\ref{E: smc not DC} in our next definition.

	\begin{definition}\label{D: sblin and splin fun cal}
		Let $E$ be an Archimedean vector lattice, fix $n\in\N$, and put $f_1,\dots,f_n\in E$.
		\begin{itemize}
			\item[(i)] Given a sublinear map $\phi\colon\R^n\to\R$, if $\underset{\bar{a}\in\underline{\partial}\phi(\bar{0})}{\sup}\ \sum_{i=1}^{n}a_if_i$ exists in $E$, then we say that $\phi(f_1,\dots,f_n)$ \textit{is defined in} $E$ and write
			\[
				\phi(f_1,\dots,f_n):=\underset{\bar{a}\in\underline{\partial}\phi(\bar{0})}{\sup}\ \sum_{i=1}^{n}a_if_i.
			\]
			\item[(ii)] For a superlinear map $\psi\colon\R^n\to\R$, if $\underset{\bar{a}\in\overline{\partial}\psi(\bar{0})}{\inf}\ \sum_{i=1}^{n}a_if_i$ exists in $E$, then we say that $\psi(f_1,\dots,f_n)$ \textit{is defined in} $E$ and write
			\[
				\psi(f_1,\dots,f_n):=\underset{\bar{a}\in\overline{\partial}\psi(\bar{0})}{\inf}\ \sum_{i=1}^{n}a_if_i.
			\]
		\end{itemize}
	\end{definition}
	
	It is known that Definition~\ref{D: sblin and splin fun cal} agrees with the continuous Archimedean vector lattice functional calculus for sublinear and superlinear maps in uniformly complete Archimedean vector lattices \cite[Theorem~5.5]{Kus}. However, we will not exploit this fact in this paper, as we aim to develop a functional calculus that is completely independent of the continuous Archimedean vector lattice function calculus of \cite{BusdPvR}.

By following the proof of Theorem~\ref{T: phi hom interchange}, we obtain the following more general result. Theorem~\ref{T: phi hom interchange avl} will primarily be utilized in the proof of Theorem~\ref{T: swapTandh} to verify that our semicontinuous Archimedean vector lattice functional calculus (to be defined in Definition~\ref{D: funcal}) properly extends the continuous Archimedean vector lattice functional calculus.
	
	\begin{theorem}\label{T: phi hom interchange avl}
		Fix $n\in\N$. Let $E$ and $F$ be Archimedean vector lattices, let $T\colon E\to F$ be a vector lattice homomorphism, and put $f_1,\dots,f_n\in E$. 
		\begin{itemize}
			\item[(i)] If $\phi\colon\R^n\to\R$ is a sublinear map, $\phi(f_1,\dots,f_n)$ is defined in $E$, and the expression $\phi\bigl(T(f_1),\dots,T(f_n)\bigr)$ is defined in $F$, then
			\[
			T\bigl(\phi(f_1,\dots,f_n)\bigr)=\phi\bigl(T(f_1),\dots,T(f_n)\bigr).
			\]
			\item[(ii)] If $\psi\colon\R^n\to\R$ is a superlinear map, $\psi(f_1,\dots,f_n)$ is defined in $E$, and the expression $\psi\bigl(T(f_1),\dots,T(f_n)\bigr)$ is defined in $F$, then
			\[
			T\bigl(\psi(f_1,\dots,f_n)\bigr)=\psi\bigl(T(f_1),\dots,T(f_n)\bigr).
			\]
		\end{itemize}
	\end{theorem}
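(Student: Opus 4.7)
The plan is to observe that the proof of Theorem~\ref{T: phi hom interchange} uses Dedekind completeness of $E$ and $F$ only to guarantee the existence of the two suprema appearing in it; since in the present statement the existence of those suprema is built into the hypotheses, the same compactness-based approximation argument should carry over essentially verbatim. I will only write part (i), as part (ii) is entirely analogous.

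First, I will fix $m\in\N$ and, exactly as in the proof of Theorem~\ref{T: phi hom interchange}, use compactness of $\underline{\partial}\phi(\bar{0})$ in $\R^n$ to extract a finite collection $\overline{a^1},\dots,\overline{a^k}\in\underline{\partial}\phi(\bar{0})$ such that the open cubes $O_{\overline{a^j},\frac{1}{2mn}}$ cover $\underline{\partial}\phi(\bar{0})$. For each $\bar{a}\in\underline{\partial}\phi(\bar{0})$, choosing $j_0$ with $\bar{a}\in O_{\overline{a^{j_0}},\frac{1}{2mn}}$ gives
\[
\sum_{i=1}^{n}a_if_i-\bigvee_{j=1}^{k}\sum_{i=1}^{n}a^j_if_i\leq \sum_{i=1}^{n}(a_i-a^{j_0}_i)f_i\leq\dfrac{1}{2m}\sum_{i=1}^{n}|f_i|.
\]
Taking the supremum over $\bar{a}\in\underline{\partial}\phi(\bar{0})$, which exists in $E$ by hypothesis, and noting that the finite join lies below $\phi(f_1,\dots,f_n)$ (since each $\overline{a^j}\in\underline{\partial}\phi(\bar{0})$), I obtain
\[
0\leq \phi(f_1,\dots,f_n)-\bigvee_{j=1}^{k}\sum_{i=1}^{n}a^j_if_i\leq\dfrac{1}{2m}\sum_{i=1}^{n}|f_i|.
\]
The identical argument, applied in $F$ using that $\phi\bigl(T(f_1),\dots,T(f_n)\bigr)$ is defined in $F$, yields the analogous estimate with $f_i$ replaced by $T(f_i)$.

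From here the conclusion is reached exactly as in the proof of Theorem~\ref{T: phi hom interchange}: applying $T$ to the first inequality and using that $T$, being a vector lattice homomorphism, preserves finite joins and the modulus, a triangle-inequality step with the common approximant $\bigvee_{j=1}^{k}\sum_{i=1}^{n}a^j_iT(f_i)=T\bigl(\bigvee_{j=1}^{k}\sum_{i=1}^{n}a^j_if_i\bigr)$ produces
\[
\left|T\bigl(\phi(f_1,\dots,f_n)\bigr)-\phi\bigl(T(f_1),\dots,T(f_n)\bigr)\right|\leq\dfrac{1}{m}\sum_{i=1}^{n}T(|f_i|).
\]
Since $m\in\N$ was arbitrary and $F$ is Archimedean, the left-hand side must be $0$.

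The main thing to check -- and really the only obstacle, mild as it is -- is that none of the intermediate steps surreptitiously invoke a supremum or infimum that need not exist in a general Archimedean vector lattice. The finite joins $\bigvee_{j=1}^{k}\sum_{i=1}^{n}a^j_if_i$ and their images under $T$ always exist in any vector lattice, and the only two infinite suprema used in the whole argument are the two suprema whose existence is explicitly assumed as part of the hypotheses; hence the proof of Theorem~\ref{T: phi hom interchange} transfers without any essential modification.
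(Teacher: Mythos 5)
Your proof is correct and is precisely the argument the paper intends: the paper proves this theorem by remarking that the proof of Theorem~\ref{T: phi hom interchange} carries over, and you have verified the essential point, namely that Dedekind completeness is used there only to furnish the two suprema whose existence is now assumed. Nothing further is needed.
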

	
	We utilize the following notation in our next corollary.
	
	\begin{notation}
		Given an Archimedean vector lattice $E$, we denote its Dedekind completion by $E^\delta$.
	\end{notation}
	
	The next result is a corollary to Theorem~\ref{T: phi hom interchange avl} and states that the semicontinuous Archimedean vector lattice functional calculus can also be calculated in different vector sublattices without changing the result.
	
	\begin{corollary}\label{C: subspace invariant}
		Let $n\in\N$, let $E$ be an Archimedean vector lattice, and suppose $E_0$ is a vector sublattice of $E$. Put $f_1,\dots,f_n\in E_0$.
		\begin{itemize}
			\item[(i)] Suppose $\phi\colon\R^n\to\R$ is a sublinear map. If $E_0-\underset{\bar{a}\in\underline{\partial}\phi(\bar{0})}{\sup}\ \sum_{i=1}^{n}a_if_i$ exists in $E_0$, then $E-\underset{\bar{a}\in\underline{\partial}\phi(\bar{0})}{\sup}\ \sum_{i=1}^{n}a_if_i$ exists in $E$, and
			\[
			E_0-\underset{\bar{a}\in\underline{\partial}\phi(\bar{0})}{\sup}\ \sum_{i=1}^{n}a_if_i=E-\underset{\bar{a}\in\underline{\partial}\phi(\bar{0})}{\sup}\ \sum_{i=1}^{n}a_if_i.
			\]
			\item[(ii)] Suppose $\psi\colon\R^n\to\R$ is a superlinear map. If $E_0-\underset{\bar{a}\in\overline{\partial}\psi(\bar{0})}{\inf}\ \sum_{i=1}^{n}a_if_i$ exists in $E_0$, then $E-\underset{\bar{a}\in\overline{\partial}\psi(\bar{0})}{\inf}\ \sum_{i=1}^{n}a_if_i$ exists in $E$, and
			\[
			E_0-\underset{\bar{a}\in\overline{\partial}\psi(\bar{0})}{\inf}\ \sum_{i=1}^{n}a_if_i=E-\underset{\bar{a}\in\overline{\partial}\psi(\bar{0})}{\inf}\ \sum_{i=1}^{n}a_if_i.
			\]
		\end{itemize}
		In other words, the value of $\phi(f_1,\dots,f_n)$ ($\psi(f_1,\dots,f_n)$) remains the same whether its associated supremum (infimum) is calculated in $E_0$ or $E$. 
	\end{corollary}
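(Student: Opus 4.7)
The plan is to prove part (i); part (ii) is entirely analogous with suprema, sublinear maps, and subdifferentials replaced by infima, superlinear maps, and superdifferentials. Write $g:=E_0-\underset{\bar a\in\underline{\partial}\phi(\bar 0)}{\sup}\sum_{i=1}^{n}a_if_i$ for the element existing by hypothesis. The goal is to show that $g$ also serves as the supremum of the set $S:=\{\sum_{i=1}^{n}a_if_i\,:\,\bar a\in\underline{\partial}\phi(\bar 0)\}$ when this set is regarded as a subset of the larger lattice $E$.

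That $g$ is an upper bound for $S$ in $E$ is automatic, since $E_0$ is a vector sublattice of $E$ and the order on $E_0$ is inherited from $E$. For the least-upper-bound property, fix an arbitrary upper bound $h\in E$ of $S$ and an arbitrary $m\in\N$. I would then replay verbatim the compactness/$\epsilon$-net argument from the proof of Theorem~\ref{T: phi hom interchange}: using that $\underline{\partial}\phi(\bar 0)$ is compact, select $\overline{a^1},\dots,\overline{a^k}\in\underline{\partial}\phi(\bar 0)$ so that for each $\bar a\in\underline{\partial}\phi(\bar 0)$ some index $j_0$ satisfies $\max_i|a_i-a^{j_0}_i|<\frac{1}{2mn}$. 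The finite join $G_m:=\bigvee_{j=1}^{k}\sum_{i=1}^{n}a^j_if_i$ exists in $E_0$, and the same estimate used in that proof yields $\sum_{i=1}^{n}a_if_i - G_m \leq \frac{1}{2m}\sum_{i=1}^{n}|f_i|$ for every $\bar a\in\underline{\partial}\phi(\bar 0)$.

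Taking the supremum over $\bar a$ in $E_0$ (which exists because $G_m$ is a fixed element and $g = E_0-\sup$), I obtain $g - G_m \leq \frac{1}{2m}\sum_{i=1}^{n}|f_i|$ in $E_0$, and therefore in $E$. Since $h$ dominates every summand appearing in $G_m$, it dominates $G_m$ itself, so $g \leq h + \frac{1}{2m}\sum_{i=1}^{n}|f_i|$ in $E$ for every $m\in\N$. The Archimedean property of $E$ now forces $g \leq h$, proving that $g$ is the least upper bound of $S$ in $E$.

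The main obstacle is conceptual rather than technical: Theorem~\ref{T: phi hom interchange avl} cannot be applied directly to the inclusion $E_0\hookrightarrow E$, because its hypothesis requires both suprema to be known to exist in advance, and existence in $E$ is precisely what must be established. The point of the argument is that we only ever need \emph{finite} joins $G_m$, which automatically live inside the sublattice $E_0$; these finite joins approximate $g$ closely enough that the Archimedean property of $E$ is sufficient to close the proof without invoking Dedekind completeness.
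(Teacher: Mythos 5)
Your proof is correct, but it takes a genuinely different route from the paper. The paper's proof passes through the Dedekind completion: it invokes Proposition~\ref{P: dc implies phic} to get existence of the supremum in $E^\delta$, applies Theorem~\ref{T: phi hom interchange avl} to the inclusion $E_0\hookrightarrow E^\delta$ (where both suprema are now known to exist, so the objection you raise about applying that theorem to $E_0\hookrightarrow E$ does not arise) to conclude $E_0$-$\sup$ equals $E^\delta$-$\sup$, and then observes that a supremum computed in $E^\delta$ which happens to lie in the sublattice $E$ is also the supremum in $E$. Your argument instead stays entirely inside $E$ and re-runs the compactness/$\varepsilon$-net estimate from Theorem~\ref{T: phi hom interchange} directly against an arbitrary upper bound $h\in E$, closing with the Archimedean property of $E$. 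Each step of yours checks out: the finite join $G_m$ lies in $E_0$ and agrees with the join taken in $E$, the bound $g\leq G_m+\frac{1}{2m}\sum_{i=1}^n|f_i|$ follows from $g$ being the $E_0$-supremum, $h\geq G_m$ since $h$ dominates each $\sum_{i=1}^n a^j_i f_i\in S$, and $g\leq h+\frac{1}{2m}\sum_{i=1}^n|f_i|$ for all $m$ forces $g\leq h$. What your approach buys is self-containedness --- no appeal to the existence of $E^\delta$ or to Proposition~\ref{P: dc implies phic} --- at the cost of repeating the approximation machinery; the paper's version is shorter given the tools already established, and since Theorem~\ref{T: phi hom interchange avl} is itself proved by the same $\varepsilon$-net argument, the two proofs ultimately rest on the same estimate.
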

	
	\begin{proof}
		Assume that $E_0-\underset{\bar{a}\in\underline{\partial}\phi(\bar{0})}{\sup}\ \sum_{i=1}^{n}a_if_i$ exists in $E_0$. By Proposition~\ref{P: dc implies phic}, we have that $E^\delta-\underset{\bar{a}\in\underline{\partial}\phi(\bar{0})}{\sup}\ \sum_{i=1}^{n}a_if_i$ exists in $E^\delta$ as well. Thus, applying Theorem~\ref{T: phi hom interchange avl} with respect to the inclusion map $T\colon E_0\to E^\delta$, we get
		\[
		E_0-\underset{\bar{a}\in\underline{\partial}\phi(\bar{0})}{\sup}\ \sum_{i=1}^{n}a_if_i=E^\delta-\underset{\bar{a}\in\underline{\partial}\phi(\bar{0})}{\sup}\ \sum_{i=1}^{n}a_if_i.
		\]
		Then $E^\delta-\underset{\bar{a}\in\underline{\partial}\phi(\bar{0})}{\sup}\ \sum_{i=1}^{n}a_if_i\in E_0$, and therefore $E^\delta-\underset{\bar{a}\in\underline{\partial}\phi(\bar{0})}{\sup}\ \sum_{i=1}^{n}a_if_i\in E$ since $E_0$ is a vector sublattice of $E$. Hence $E-\underset{\bar{a}\in\underline{\partial}\phi(\bar{0})}{\sup}\ \sum_{i=1}^{n}a_if_i$ exists in $E$ and
		\[
		E-\underset{\bar{a}\in\underline{\partial}\phi(\bar{0})}{\sup}\ \sum_{i=1}^{n}a_if_i=E^\delta-\underset{\bar{a}\in\underline{\partial}\phi(\bar{0})}{\sup}\ \sum_{i=1}^{n}a_if_i=E_0-\underset{\bar{a}\in\underline{\partial}\phi(\bar{0})}{\sup}\ \sum_{i=1}^{n}a_if_i.
		\]
	\end{proof}
	
	Of particular interest in the study of the sublinear and superlinear Archimedean vector lattice functional calculus is vector lattices in which this functional calculus, with respect to a specific sublinear or superlinear map $\xi\colon\R^n\to\R$, can always be performed. This leads us to our next definition.

\begin{definition}\label{D: xi closed}
	Fix $n\in\N$. Let $\xi\colon\R^n\to\R$ be a sublinear map or a superlinear map. We call an Archimedean vector lattice $E$ $\xi$-\textit{closed} if for all $f_1,\dots,f_n\in E$, $\xi(f_1,\dots,f_n)$ is defined in $E$.
\end{definition}

In light of Definition~\ref{D: xi closed}, Proposition~\ref{P: dc implies phic} can be rephrased as stating that every Dedekind complete vector lattice is $\xi$-closed for every sublinear or superlinear map $\xi\colon\R^n\to\R$.

Given any Archimedean vector lattice $E$ and a sublinear or superlinear map $\xi\colon\R^n\to\R$, there exists a smallest $\xi$-closed Archimedean vector lattice that contains $E$ as a vector sublattice. Indeed, following Azouzi's construction of the square mean closure in \cite[Remark~4]{Az}, we define the $\xi$-closure of an Archimedean vector lattice $E$.

\begin{definition}\label{D: xi-closure}
	Given $n\in\N$, an Archimedean vector lattice $E$, and a sublinear or superlinear map $\xi\colon\R^n\to\R$, we set $E_1:=E$. For $m\in\N$, we define $E_{m+1}$ to be the vector sublattice of $E^\delta$ generated by the set
	\[
	\bigl\{E_m\cup\{\xi(f_1,\dots,f_n)\ :\ f_1,\dots,f_n\in E_n\}\bigr\}.
	\]
	We define $E^\xi:=\bigcup_{m\in\N}E_m$ and call $E^\xi$ the $\xi$-\textit{closure} of $E$.
\end{definition}

We conclude this section with the following proposition, which is immediate.

\begin{proposition}
	If $E$ is an Archimedean vector lattice, $n\in\N$, and $\xi\colon\R^n\to\R$ is a sublinear or superlinear map, then $E^\xi$ is an $\xi$-closed Archimedean vector lattice.
\end{proposition}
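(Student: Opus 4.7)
My plan is to verify the three things packed into the statement in turn: (a) $E^\xi$ is a vector sublattice of $E^\delta$, (b) $E^\xi$ is Archimedean, and (c) for every $f_1,\dots,f_n\in E^\xi$, the element $\xi(f_1,\dots,f_n)$ is defined in $E^\xi$.

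First I would observe that the sequence $E=E_1\subseteq E_2\subseteq\cdots$ is increasing, because $E_{m+1}$ is by definition the vector sublattice of $E^\delta$ generated by a set containing $E_m$. Since each $E_m$ is a vector sublattice of $E^\delta$, and every vector lattice operation involves only finitely many elements (which therefore all lie in a common $E_m$), the union $E^\xi=\bigcup_{m\in\N}E_m$ is again a vector sublattice of $E^\delta$. Being a vector sublattice of the Archimedean vector lattice $E^\delta$, it is itself Archimedean, settling (a) and (b).

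For (c), pick $f_1,\dots,f_n\in E^\xi$. Each $f_i$ lies in some $E_{m_i}$, and by the monotonicity of the chain all $f_i$ lie in $E_m$ for $m:=\max\{m_1,\dots,m_n\}$. Since $E^\delta$ is Dedekind complete, Proposition~\ref{P: dc implies phic} guarantees the existence in $E^\delta$ of the supremum (respectively infimum) that defines $\xi(f_1,\dots,f_n)$; call this element $g\in E^\delta$. By construction of $E_{m+1}$, the element $g$ belongs to $E_{m+1}\subseteq E^\xi$.

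It remains to verify that $g$ realizes the supremum (or infimum) not merely in $E^\delta$ but in the sublattice $E^\xi$. This is immediate: $g$ is an upper bound for $\{\sum_i a_if_i:\bar a\in\underline{\partial}\phi(\bar 0)\}$ in $E^\delta$ and lies in $E^\xi$, so it is certainly an upper bound there; and any $h\in E^\xi$ dominating every $\sum_i a_if_i$ in $E^\xi$ also dominates them in $E^\delta$, hence satisfies $h\geq g$ in $E^\delta$, which transfers back to $E^\xi$ because both elements live there. The superlinear case is identical with inf in place of sup. There is no real obstacle here; the only mild subtlety is making sure one invokes Proposition~\ref{P: dc implies phic} in $E^\delta$ to produce $g$ before checking that $g$ was already placed inside $E^\xi$ by the inductive construction.
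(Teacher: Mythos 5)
Your proof is correct and is precisely the argument the paper leaves implicit when it declares the proposition ``immediate'': the chain $E_1\subseteq E_2\subseteq\cdots$ is increasing, its union is a vector sublattice of the Archimedean lattice $E^\delta$ (hence Archimedean), and any $f_1,\dots,f_n\in E^\xi$ lie in a common $E_m$, so $\xi(f_1,\dots,f_n)$, which exists in $E^\delta$ by Proposition~\ref{P: dc implies phic}, is placed in $E_{m+1}\subseteq E^\xi$ by construction. Your final check that an $E^\delta$-supremum belonging to $E^\xi$ is also the supremum in $E^\xi$ is the right (and necessary) closing step, consistent with the easy direction of Corollary~\ref{C: subspace invariant}.
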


	\section{A Dedekind Complete Vector Lattice Functional Calculus for Semicontinuous Positively Homogeneous Functions}\label{S: h fun cal}
	
	We extend the functional calculus for sublinear and superlinear maps defined in Dedekind complete vector lattices via Definition~\ref{D: DC sblin and splin fun cal} to semicontinuous positively homogeneous functions that are bounded on the unit sphere in this section. In Section~\ref{S: non DC h fun cal}, we will extend, when possible, the theory of this section to a wider class of Archimedean vector lattices.
	
	\begin{notation}
		For $n\in\N$, we as usual denote the unit sphere of $\R^n$ by $S^{n-1}$.
	\end{notation}
	
	The following definition is standard, but we include it for completeness.
	
	\begin{definition}\label{D: bdd on unit sphere}
		Given $n\in\N$ and a function $f\colon\R^n\to\R$, we say $f$ is \textit{bounded on the unit sphere} (respectively, \textit{bounded on the closed unit ball}) if there exists $m,M\in\R$ such that for all $\bar{x}\in S^{n-1}$ (respectively, for all $\bar{x}\in B_1[\bar{0}]:=\{\bar{x}\in\R^n\ :\ \|\bar{x}\|\leq 1\}$), we have $m\leq f(\bar{x})\leq M$.
	\end{definition}
	
	\begin{remark}\label{R: bdd on unit ball iff bdd on unit sphere}
		For $n\in\N$, we note that a positively homogeneous function $h\colon\R^n\to\R$ is bounded on the unit sphere if and only if it is bounded on the closed unit ball. Indeed, if $h$ is bounded on the closed unit ball, then $h$ is clearly bounded on the unit sphere. Conversely, if $h$ is bounded on the unit sphere, then there exists $m,M\in(0,\infty)$, for which $-m\leq h(\bar{x})\leq M$ holds for every $\bar{x}\in S^{n-1}$. If $\bar{x}=\bar{0}$, then $-m\leq h(\bar{x})=0\leq M$. On the other hand, if $\bar{x}\in B_1[\bar{0}]$ and $\bar{x}\neq 0$, then $\dfrac{\bar{x}}{\|\bar{x}\|}\in S^{n-1}$ and $0<\|\bar{x}\|\leq 1$. We thus get
		\[
		-m\leq -m\|x\|\leq h\left(\dfrac{\bar{x}}{\|\bar{x}\|}\right)\|\bar{x}\|\leq M\|x\|\leq M.
		\]
		
		Since $h(\bar{x})=h\left(\dfrac{\bar{x}}{\|\bar{x}\|}\right)\|\bar{x}\|$, we see that $h$ is bounded on the closed unit ball.
	\end{remark}
	
	The following result constitutes the primary tool we utilize for the development of the semicontinuous Archimedean vector lattice functional calculus.
	
	\begin{proposition}\cite[Lemma~5.2]{DR}\label{P: DR} 
		If $n\in\N$ and $h\colon\R^n\to\R$ is a positively homogeneous function that is bounded on the unit sphere, then the following hold.
		\begin{itemize}
			\item[(i)] $h$ is upper semicontinuous (on $\R^n$) if and only if there exists a family $\Phi$ of real-valued sublinear maps defined on $\R^n$ such that
			\begin{equation}\label{eq: inf-sublinear rep}
				h(\bar{x})=\underset{\phi\in\Phi}{\inf}\ \phi(\bar{x})\qquad (\bar{x}\in\R^n).
			\end{equation}
			\item[(ii)] $h$ is lower semicontinuous (on $\R^n$) if and only if there exists a family $\Psi$ of real-valued superlinear maps defined on $\R^n$ such that
			\begin{equation}\label{eq: sup-superlinear rep}
				h(\bar{x})=\underset{\psi\in\Psi}{\sup}\ \psi(\bar{x})\qquad (\bar{x}\in\R^n).
			\end{equation}
			\item[(iii)] $h$ is continuous (on $\R^n$) if and only if there exists a family $\Phi$ of real-valued sublinear maps defined on $\R^n$ and a collection $\Psi$ of real-valued superlinear maps defined on $\R^n$ for which the identities \eqref{eq: inf-sublinear rep} and \eqref{eq: sup-superlinear rep} hold.
		\end{itemize}
	\end{proposition}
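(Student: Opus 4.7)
My plan is to establish the equivalences in (i), (ii), and (iii) separately. The reverse implications in (i) and (ii) are immediate from the preceding Remark, which asserts that every sublinear or superlinear map on $\R^n$ is continuous, hence in particular upper and lower semicontinuous. Since the infimum of a family of upper semicontinuous functions is upper semicontinuous, and the supremum of a family of lower semicontinuous functions is lower semicontinuous, the function defined by \eqref{eq: inf-sublinear rep} (respectively, \eqref{eq: sup-superlinear rep}) is upper semicontinuous (respectively, lower semicontinuous), as long as the resulting expression is real-valued, which is the standing assumption on $h$. Part (iii) will follow directly from (i) and (ii), since continuity is the conjunction of upper and lower semicontinuity.

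For the forward implication in (i), the strategy is to show that for each $\bar{x}_0\in\R^n$ and each $\varepsilon>0$, there exists a sublinear map $\phi_{\bar{x}_0,\varepsilon}\colon\R^n\to\R$ satisfying $\phi_{\bar{x}_0,\varepsilon}\geq h$ on $\R^n$ and $\phi_{\bar{x}_0,\varepsilon}(\bar{x}_0)\leq h(\bar{x}_0)+\varepsilon$. The family $\Phi$ of all such maps, together with the global sublinear majorant $M\|\cdot\|$ where $M$ is a bound for $|h|$ on the unit sphere (whose existence was used in the Remark), would then satisfy $h(\bar{x})=\inf_{\phi\in\Phi}\phi(\bar{x})$ for every $\bar{x}\in\R^n$; the case $\bar{x}_0=\bar{0}$ is automatic since $\phi(\bar{0})=0=h(\bar{0})$ for every sublinear $\phi$. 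To construct such a $\phi_{\bar{x}_0,\varepsilon}$, I would invoke the Minkowski duality identifying real-valued sublinear maps on $\R^n$ with nonempty convex compact subsets of $\R^n$ via the support-function formula $\phi(\bar{x})=\max_{\bar{a}\in A}\sum_{i=1}^{n}a_i x_i$. The task then reduces to producing a nonempty convex compact set $A\subseteq\R^n$ that is thin in the $\bar{x}_0$ direction (so that $\max_{\bar{a}\in A}\sum_{i=1}^n a_i(x_0)_i\leq h(\bar{x}_0)+\varepsilon$) yet sufficiently wide in transverse directions to dominate $h$ everywhere. Upper semicontinuity of $h$ supplies a conical neighborhood of $\bar{x}_0$ on which $h<h(\bar{x}_0)+\varepsilon/2$, while the sphere-bound $M$ controls $h$ away from that neighborhood; $A$ is built to interpolate between these two regimes.

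The forward direction in (ii) will then follow by applying (i) to $-h$, which is upper semicontinuous, positively homogeneous, and bounded on the unit sphere precisely when $h$ is lower semicontinuous and sphere-bounded. Writing $-h=\inf_{\phi\in\Phi}\phi$ with each $\phi$ sublinear, the family $\Psi:=\{-\phi:\phi\in\Phi\}$ consists of superlinear maps and satisfies $h=\sup_{\psi\in\Psi}\psi$, giving \eqref{eq: sup-superlinear rep}. Part (iii) is then immediate: if $h$ is continuous, apply both (i) and (ii); conversely, if both representations exist, (i) and (ii) give that $h$ is simultaneously upper and lower semicontinuous, hence continuous.

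The main obstacle I anticipate is the explicit construction of the convex compact set $A$ in the forward direction of (i). The hypograph $\{(\bar{x},t)\in\R^{n+1}:t\leq h(\bar{x})\}$ is a closed cone (closed by upper semicontinuity, a cone by positive homogeneity) but is in general not convex, so one cannot separate the point $(\bar{x}_0,h(\bar{x}_0)+\varepsilon)$ from it by a direct Hahn-Banach argument and read off the desired linear functional. Navigating this non-convexity to assemble the required convex compact $A$ is the technical heart of the result, and at this step I would appeal to the detailed geometric construction carried out in \cite[Lemma~5.2]{DR}.
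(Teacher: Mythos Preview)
The paper does not prove this proposition at all: it is quoted verbatim as \cite[Lemma~5.2]{DR}, and the only remark the paper adds is that the hypothesis ``bounded on the closed unit ball'' in the original source may be replaced by ``bounded on the unit sphere'' in view of Remark~\ref{R: bdd on unit ball iff bdd on unit sphere}. Your proposal therefore goes well beyond what the paper does, since you supply an outline of an actual argument.

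Your treatment of the reverse implications in (i) and (ii), the reduction of (ii) to (i) via $h\mapsto -h$, and the derivation of (iii) from (i) and (ii) are all correct and standard. For the forward direction of (i) you correctly identify the strategy (produce, for each $\bar{x}_0$ and each $\varepsilon>0$, a sublinear majorant of $h$ that is $\varepsilon$-tight at $\bar{x}_0$) and the geometric picture behind it (a support set that is thin along $\bar{x}_0$ and wide transversally, calibrated by upper semicontinuity near $\bar{x}_0$ and the global bound $M$ elsewhere). You are also right that a naive Hahn--Banach separation fails because the hypograph cone need not be convex. Since at that point you, like the paper, ultimately defer to \cite[Lemma~5.2]{DR} for the explicit construction, your proposal is consistent with---indeed more informative than---the paper's own handling of the statement.
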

	
	We note here that \cite[Lemma~5.2]{DR} is stated for positively homogeneous functions which are bounded on the closed unit ball. By Remark~\ref{R: bdd on unit ball iff bdd on unit sphere} however, it is sufficient to consider positively homogeneous functions that are bounded on the unit sphere. 
	
	\begin{definition}\label{D: inf-sublinear and sup-superlinear}
		In the setting of Proposition~\ref{P: DR}, we refer to \eqref{eq: inf-sublinear rep} as an \textit{inf-sublinear} representation for $h$, where as \eqref{eq: sup-superlinear rep} will be called a \textit{sup-superlinear} representation for $h$ in this paper.
	\end{definition}
	
	In order to prove that our semicontinuous Dedekind complete vector lattice functional calculus can be defined in any Dedekind complete vector lattice, we need the following definition.
	
	\begin{definition}\label{D: dom}
		Given $n\in\N$ and a function $f\colon\R^n\to\R$, we say that $f$ \textit{dominates a superlinear map} if there exists a superlinear map $\psi\colon\R^n\to\R$ such that, for all $\bar{x}\in\R^n$, we have $f(\bar{x})\geq\psi(\bar{x})$. Likewise, we say that $f$ is \textit{dominated by a sublinear map} if there exists a sublinear map $\phi\colon\R^n\to\R$ such that, for all $\bar{x}\in\R^n$, we have $f(\bar{x})\leq\phi(\bar{x})$.  
	\end{definition}
	
	The following lemma will also be needed.
	
	\begin{lemma}\label{L: bdd on B iff dominating}
		Fix $n\in\N$, and let $h\colon\R^n\to\R$ be a positively homogeneous function. The following are equivalent.
		\begin{itemize}
			\item[(i)] $h$ is bounded on the unit sphere, and
			\item[(ii)] $h$ dominates a superlinear map, and $h$ is dominated by a sublinear map.
		\end{itemize}
	\end{lemma}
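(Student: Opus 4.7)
The plan is to prove the equivalence by establishing each direction directly, using the Euclidean norm to construct explicit sublinear/superlinear bounds for the implication $(i)\Rightarrow(ii)$, and exploiting the continuity of sublinear/superlinear maps (noted in the Remark following Proposition~\ref{P: dc implies phic}) together with the compactness of $S^{n-1}$ for the implication $(ii)\Rightarrow(i)$.

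For $(i)\Rightarrow(ii)$, I assume $h$ is bounded on $S^{n-1}$, so I fix $M>0$ with $|h(\bar{x})|\leq M$ for every $\bar{x}\in S^{n-1}$. I propose to use the sublinear map $\phi\colon\R^n\to\R$ defined by $\phi(\bar{x}):=M\|\bar{x}\|$ and the superlinear map $\psi\colon\R^n\to\R$ defined by $\psi(\bar{x}):=-M\|\bar{x}\|$. The sublinearity of $\phi$ and superlinearity of $\psi$ follow at once from the triangle inequality and positive homogeneity of the norm. To verify that $\psi\leq h\leq\phi$ pointwise on $\R^n$, the case $\bar{x}=\bar{0}$ is immediate since $h(\bar{0})=0$ by positive homogeneity, and for $\bar{x}\neq\bar{0}$ I use the unit vector $\bar{x}/\|\bar{x}\|\in S^{n-1}$ together with positive homogeneity to write $h(\bar{x})=\|\bar{x}\|\,h(\bar{x}/\|\bar{x}\|)$, from which the bound $|h(\bar{x})|\leq M\|\bar{x}\|$ is immediate.

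For $(ii)\Rightarrow(i)$, I suppose $\psi\leq h\leq\phi$ on $\R^n$ for some superlinear $\psi$ and sublinear $\phi$. By the Remark immediately following Proposition~\ref{P: dc implies phic}, both $\phi$ and $\psi$ are continuous on $\R^n$. Since $S^{n-1}$ is compact, $\phi$ and $\psi$ attain finite bounds on $S^{n-1}$; choosing $M:=\max_{\bar{x}\in S^{n-1}}\phi(\bar{x})$ and $m:=\min_{\bar{x}\in S^{n-1}}\psi(\bar{x})$ yields $m\leq\psi(\bar{x})\leq h(\bar{x})\leq\phi(\bar{x})\leq M$ for every $\bar{x}\in S^{n-1}$, so $h$ is bounded on the unit sphere.

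I do not anticipate any substantive obstacle here; the lemma is essentially a packaging of the continuity of sublinear/superlinear maps together with the positive homogeneity of $h$. The only place that requires any care is remembering to handle $\bar{x}=\bar{0}$ separately in the construction of the bounds $\pm M\|\bar{x}\|$, and to invoke the earlier Remark rather than re-proving continuity of sublinear maps.
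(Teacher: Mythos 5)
Your proposal is correct and follows essentially the same route as the paper: the forward implication uses the scaled norm maps $\pm M\|\bar{x}\|$ (the paper uses $-m\|\bar{x}\|$ and $M\|\bar{x}\|$ with possibly distinct bounds, a purely cosmetic difference) with the same case split at $\bar{x}=\bar{0}$, and the reverse implication likewise appeals to the continuity of sublinear and superlinear maps plus compactness of the unit sphere.
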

	
	\begin{proof}
		$(i)\implies(ii)$ Suppose $h$ is bounded on the unit sphere. As noted in Remark~\ref{R: bdd on unit ball iff bdd on unit sphere}, there exist $m,M\in(0,\infty)$ such that for all $\bar{x}\in S^{n-1}$, $-m\leq h(\bar{x})\leq M$. Define $\psi,\phi\colon\R^n\to\R$ by
		\[
		\psi(\bar{x})=-m\|\bar{x}\|\quad (\bar{x}\in\R^n)\qquad \text{and}\qquad \phi(\bar{x})=M\|x\|\quad (\bar{x}\in\R^n).
		\]
		It is clearly seen that $\psi$ is superlinear and $\phi$ is sublinear.
		
		Let $\bar{x}\in\R^n$. If $\bar{x}=\bar{0}$, then $\psi(\bar{x})=h(\bar{x})=\phi(\bar{x})=0$, since $\psi$, $h$, and $\phi$ are all positively homogeneous. Suppose that $\bar{x}\neq\bar{0}$, so that $\dfrac{x}{\|\bar{x}\|}\in S^{n-1}$. We thus obtain
		\[
		\psi(\bar{x})=-m\|\bar{x}\|\leq h\left(\dfrac{\bar{x}}{\|\bar{x}\|}\right)\|\bar{x}\|\leq M\|\bar{x}\|=\phi(\bar{x}).
		\]
		Since $h\left(\dfrac{\bar{x}}{\|\bar{x}\|}\right)\|\bar{x}\|=h(\bar{x})$, we have the desired implication.
		
		$(ii)\implies(i)$ This implication immediately follows from the fact that superlinear maps and sublinear maps are continuous on $\R^n$ and are hence bounded on the compact unit sphere.
	\end{proof}
	
	We next prove that if $h\colon\R^n\to\R$ is an upper (lower) semicontinuous positively homogeneous function with inf-sublinear representation $h(\bar{x})=\underset{\phi\in\Phi}{\inf}\ \phi(\bar{x})\quad (\bar{x}\in\R^n)$ $\bigl($sup-superlinear representation $h(\bar{x})=\underset{\psi\in\Psi}{\sup}\ \psi(\bar{x})\quad (\bar{x}\in\R^n)$$\bigr)$, $E$ is a Dedekind complete vector lattice, and $f_1,\dots,f_n\in E$, then $\underset{\phi\in\Phi}{\inf}\ \phi(f_1,\dots,f_n)\quad (\underset{\psi\in\Psi}{\sup}\ \psi(f_1,\dots,f_n))$ exists in $E$. This initial step is necessary to define our semicontinuous Dedekind complete vector lattice functional calculus in Definition~\ref{D: DC h funcal}.
	
	\begin{theorem}\label{T: dc implies hc}
		Fix $n\in\N$. Let $h\colon\R^n\to\R$ be a positively homogeneous function that is bounded on the unit sphere, suppose that $E$ is a Dedekind complete vector lattice, and put $f_1,\dots,f_n\in E$.
		\begin{itemize}
			\item[(i)] If $h$ is upper semicontinuous with inf-sublinear representation 
			\[
			h(\bar{x})=\underset{\phi\in\Phi}{\inf}\ \phi(\bar{x})\quad (\bar{x}\in\R^n),
			\]
			then $\underset{\phi\in\Phi}{\inf}\ \phi(f_1,\dots,f_n)$ exists in $E$.
			\item[(ii)] If $h$ is lower semicontinuous with sup-superlinear representation 
			\[
			h(\bar{x})=\underset{\psi\in\Psi}{\sup}\ \psi(\bar{x})\quad (\bar{x}\in\R^n),
			\]
			then $\underset{\psi\in\Psi}{\sup}\ \psi(f_1,\dots,f_n)$ exists in $E$.
		\end{itemize}
	\end{theorem}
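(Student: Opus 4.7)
The plan is to show that the family $\{\phi(f_1,\dots,f_n) : \phi \in \Phi\}$ is nonempty and bounded below in $E$; Dedekind completeness of $E$ will then deliver the desired infimum. Each element $\phi(f_1,\dots,f_n)$ exists in $E$ by Proposition~\ref{P: dc implies phic}, and $\Phi$ is nonempty since $h$ is real-valued. To manufacture a uniform lower bound, I would first invoke Lemma~\ref{L: bdd on B iff dominating} to produce a superlinear map $\psi_0\colon\R^n\to\R$ with $\psi_0(\bar{x})\leq h(\bar{x})$ for every $\bar{x}\in\R^n$, and set $g:=\psi_0(f_1,\dots,f_n)\in E$ (existence again via Proposition~\ref{P: dc implies phic}).

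The central step is to verify $\phi(f_1,\dots,f_n)\geq g$ for every $\phi\in\Phi$. Since $\phi\geq h\geq\psi_0$ pointwise on $\R^n$, the Hahn-Banach sandwich theorem (Mazur-Orlicz) yields $\bar{c}\in\R^n$ such that $\psi_0(\bar{x})\leq\sum_{i=1}^{n}c_ix_i\leq\phi(\bar{x})$ for all $\bar{x}\in\R^n$, which is precisely the statement that $\bar{c}\in\underline{\partial}\phi(\bar{0})\cap\overline{\partial}\psi_0(\bar{0})$. Consequently
\[
\phi(f_1,\dots,f_n)=\underset{\bar{a}\in\underline{\partial}\phi(\bar{0})}{\sup}\ \sum_{i=1}^{n}a_if_i\ \geq\ \sum_{i=1}^{n}c_if_i\ \geq\ \underset{\bar{b}\in\overline{\partial}\psi_0(\bar{0})}{\inf}\ \sum_{i=1}^{n}b_if_i=g,
\]
so $g$ is a lower bound for $\{\phi(f_1,\dots,f_n):\phi\in\Phi\}$, and Dedekind completeness of $E$ produces $\underset{\phi\in\Phi}{\inf}\ \phi(f_1,\dots,f_n)\in E$.

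Part~(ii) will be handled dually: by Lemma~\ref{L: bdd on B iff dominating}, fix a sublinear $\phi_0\geq h$ on $\R^n$ and form $\phi_0(f_1,\dots,f_n)\in E$. For each $\psi\in\Psi$, the sandwich theorem applied to $\psi\leq h\leq\phi_0$ supplies $\bar{c}\in\underline{\partial}\phi_0(\bar{0})\cap\overline{\partial}\psi(\bar{0})$, giving $\psi(f_1,\dots,f_n)\leq\sum_{i=1}^{n}c_if_i\leq\phi_0(f_1,\dots,f_n)$; Dedekind completeness then furnishes the required supremum.

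The principal obstacle is precisely this uniform bound. The inequality $\phi(f_1,\dots,f_n)\geq\psi_0(f_1,\dots,f_n)$ does not follow formally from $\phi\geq\psi_0$ pointwise, because one side is a supremum of linear forms indexed by $\underline{\partial}\phi(\bar{0})$ while the other is an infimum indexed by $\overline{\partial}\psi_0(\bar{0})$, and these index sets are in general disjoint. The a priori estimates from the proof of Proposition~\ref{P: dc implies phic} depend on $\phi$ itself and so cannot be used. The sandwich theorem is the clean tool that resolves this, producing a single $\bar{c}$ lying in both subdifferentials and allowing the sup and inf to be compared through one common linear form.
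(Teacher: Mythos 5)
Your proposal is correct, but the key step is carried out by a genuinely different mechanism than the one in the paper. Both proofs reduce the problem to showing that $\phi(f_1,\dots,f_n)\geq\psi_0(f_1,\dots,f_n)$ for a fixed superlinear minorant $\psi_0\leq h$ obtained from Lemma~\ref{L: bdd on B iff dominating}, and you correctly diagnose that this inequality does not follow formally from $\phi\geq\psi_0$ pointwise. The paper bridges the gap by forming the vector sublattice $E_0$ generated by $f_1,\dots,f_n,\phi(f_1,\dots,f_n),\psi_0(f_1,\dots,f_n)$, applying Theorem~\ref{T: phi hom interchange} to every real-valued vector lattice homomorphism $\omega$ on $E_0$ to get $\omega\bigl(\phi(f_1,\dots,f_n)\bigr)=\phi\bigl(\omega(f_1),\dots,\omega(f_n)\bigr)\geq h(\cdots)\geq\psi_0(\cdots)=\omega\bigl(\psi_0(f_1,\dots,f_n)\bigr)$, and then invoking the Buskes--van Rooij separation theorem to pull the scalar inequality back to $E_0$. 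You instead apply the Mazur--Orlicz sandwich theorem to $\psi_0\leq\phi$ on $\R^n$ to produce a single linear form $\bar{c}\in\underline{\partial}\phi(\bar{0})\cap\overline{\partial}\psi_0(\bar{0})$ through which the supremum and infimum can be compared directly. Your route is more elementary and self-contained: it uses only finite-dimensional convex analysis and never leaves $E$, whereas the paper's route depends on the interchange theorem and on the nontrivial fact that real-valued lattice homomorphisms separate the points of a finitely generated Archimedean vector lattice. The paper's route has the advantage of uniformity of method -- the same homomorphism-separation template recurs in Propositions~\ref{P: welldefined} and \ref{P: cts well defined} and in Theorem~\ref{T: saddlerepresentation} -- and, notably, the sandwich-type existence of a linear form between $\psi$ and $\phi$ is exactly what the paper itself imports from the proof of \cite[Theorem~1]{GorTraf} in Section~\ref{S: saddlerepresnentations}, so your tool is fully consistent with the paper's later machinery. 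Your argument for part (ii) dualizes correctly, and the remaining bookkeeping (nonemptiness of $\Phi$, existence of each $\phi(f_1,\dots,f_n)$ via Proposition~\ref{P: dc implies phic}, and the final appeal to Dedekind completeness) matches the paper.
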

	
	\begin{proof}
		(i) Assume that $h\colon\R^n\to\R$ is upper semicontinuous. Let $h(\bar{x})=\underset{\phi\in\Phi}{\inf}\phi(\bar{x})\quad (\bar{x}\in\R^n)$ be an inf-sublinear representation for $h$, and fix $\phi\in\Phi$. By Lemma~\ref{L: bdd on B iff dominating}, $h$ dominates a superlinear map $\psi\colon\R^n\to\R$. Let $E_0$ be the vector sublattice of $E$ generated by
		\[
		\{f_1,\dots,f_n, \phi(f_1,\dots,f_n), \psi(f_1,\dots,f_n)\},
		\]
		and let $\omega\colon E_0\to\R$ be a vector lattice homomorphism. Using Proposition~\ref{T: phi hom interchange} in both of the equalities below, we obtain
		\begin{align*}
			\omega\bigl(\phi(f_1,\dots,f_n)\bigr)&=\phi\bigl(\omega(f_1),\dots,\omega(f_n)\bigr)\\
			&\geq h\bigl(\omega(f_1),\dots,\omega(f_n)\bigr)\\
			&\geq \psi\bigl(\omega(f_1),\dots,\omega(f_n)\bigr)\\
			&=\omega\bigl(\psi(f_1,\dots,f_n)\bigr).
		\end{align*}
		Since the set of all vector lattice homomorphisms $\omega\colon E_0\to\R$ separate the points of $E_0$ \cite[remark 1.2(ii), Theorem~2.2]{BusvR}, we conclude that $\phi(f_1,\dots,f_n)\geq\psi(f_1,\dots,f_n)$. This inequality holds for any $\phi\in\Phi$, and $E$ is Dedekind complete, so we have that $\underset{\phi\in\Phi}{\inf}\ \phi(f_1,\dots,f_n)$ exists in $E$.
		
		(ii) A symmetric proof handles the case in which $h$ is lower semicontinuous.
	\end{proof}

	Next we introduce our Dedekind complete vector lattice functional calculus for semicontinuous positively homogeneous functions that are bounded on the unit sphere.
	
	\begin{definition}\label{D: DC h funcal}
		Let $n\in\N$, and assume that $h\colon\R^n\to\R$ is an upper semicontinuous positively homogeneous function that that is bounded on the unit sphere. Furthermore, let $\Phi$ be a family of sublinear maps such that for each $\bar{x}\in\R^n$, we have $h(\bar{x})=\underset{\phi\in\Phi}{\inf}\ \phi(\bar{x})$.
		Next consider a Dedekind complete vector lattice $E$, and put $f_1,\dots,f_n\in E$. Noting that for every $\phi\in\Phi$, $\phi(f_1,\dots,f_n)$ is defined in $E$ (Proposition~\ref{P: dc implies phic}) and $\underset{\phi\in\Phi}{\inf}\ \phi(f_1,\dots,f_n)$ exists in $E$ (Theorem~\ref{T: dc implies hc}), we define
		\[
		h(f_1,\dots,f_n):=\underset{\phi\in\Phi}{\inf}\ \phi(f_1,\dots,f_n).
		\]
		
		Likewise, consider a lower semicontinuous positively homogeneous function $h\colon\R^n\to\R$ that is bounded on the unit sphere. Let $\Psi$ be a family of real-valued superlinear maps defined on $\R^n$ such that for all $\bar{x}\in\R^n$, $h(\bar{x})=\underset{\psi\in\Psi}{\sup}\ \psi(\bar{x})$.
		Recalling that for every $\psi\in\Psi$, $\psi(f_1,\dots,f_n)$ is defined in $E$ (Proposition~\ref{P: dc implies phic}) and $\underset{\psi\in\Psi}{\sup}\ \psi(f_1,\dots,f_n)$ exists in $E$ (Theorem~\ref{T: dc implies hc}), we define
		\[
		h(f_1,\dots,f_n):=\underset{\psi\in\Psi}{\sup}\ \psi(f_1,\dots,f_n).
		\]
		We refer to this functional calculus as the \textit{semicontinuous Dedekind complete vector lattice functional calculus}.
	\end{definition}
	
	Our first task regarding the semicontinuous Dedekind complete vector lattice functional calculus from Definition~\ref{D: DC h funcal} is to show that it is well-defined. The following proposition states that two inf-sublinear (sup-superlinear) representations of an upper (lower) semicontinuous positively homogeneous function which is bounded on the unit sphere yield the same output in the semicontinuous Dedekind complete vector lattice functional calculus.
	
	\begin{proposition}\label{P: welldefined}
		Let $n\in\N$, and let $h\colon\R^n\to\R$ be a positively homogeneous function that is bounded on the unit sphere. Also let $E$ be a Dedekind complete vector lattice, and put $f_1,\dots,f_n\in E$.
		\begin{itemize}
			\item[(i)] If $h$ is upper semicontinuous and $\Phi$ and $\Xi$ are families of real-valued sublinear maps defined on $\R^n$ such that for all $\bar{x}\in\R^n$,
			\[
			h(\bar{x})=\underset{\phi\in\Phi}{\inf}\ \phi(\bar{x})=\underset{\xi\in\Xi}{\inf}\ \xi(\bar{x}),
			\]
			then
			\[
			\underset{\phi\in\Phi}{\inf}\ \phi(f_1,\dots,f_n)=\underset{\xi\in\Xi}{\inf}\ \xi(f_1,\dots,f_n).
			\]
			\item[(ii)] If $h$ is lower semicontinuous, and $\Psi$ and $\Xi$ are families of real-valued superlinear maps defined on $\R^n$ such that for all $\bar{x}\in\R^n$,
			\[
			h(\bar{x})=\underset{\psi\in\Psi}{\inf}\ \psi(\bar{x})=\underset{\xi\in\Xi}{\inf}\ \xi(\bar{x}),
			\]
			then
			\[
			\underset{\psi\in\Psi}{\sup}\ \psi(f_1,\dots,f_n)=\underset{\xi\in\Xi}{\sup}\ \xi(f_1,\dots,f_n).
			\]
		\end{itemize}
	\end{proposition}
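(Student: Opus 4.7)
The plan is to prove part (i); part (ii) is symmetric. Write $u:=\underset{\phi\in\Phi}{\inf}\,\phi(f_1,\dots,f_n)$ and $v:=\underset{\xi\in\Xi}{\inf}\,\xi(f_1,\dots,f_n)$, both of which exist in $E$ by Theorem~\ref{T: dc implies hc}. By symmetry it suffices to prove $v\leq u$, and for this it is enough to establish that $v\leq\phi_0(f_1,\dots,f_n)$ for every fixed $\phi_0\in\Phi$; indeed, taking the infimum over $\phi_0\in\Phi$ then yields $v\leq u$.

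Fix $\phi_0\in\Phi$ and consider the finitely generated sublattice $E_0:=\langle f_1,\dots,f_n,\phi_0(f_1,\dots,f_n),v\rangle$ of $E$. Since $E_0$ is generated by finitely many elements of an Archimedean vector lattice, $H(E_0)$ separates the points of $E_0$ (the same result from \cite{BusvR} cited in the proof of Theorem~\ref{T: dc implies hc}). It is therefore enough to show $\omega(v)\leq\omega\bigl(\phi_0(f_1,\dots,f_n)\bigr)$ for every $\omega\in H(E_0)$. Corollary~\ref{C: subspace invariant} ensures that $\phi_0(f_1,\dots,f_n)$ is also the relevant supremum when computed inside $E_0$, so Theorem~\ref{T: phi hom interchange avl} applies to $\omega$ and gives
\[
\omega\bigl(\phi_0(f_1,\dots,f_n)\bigr)=\phi_0\bigl(\omega(f_1),\dots,\omega(f_n)\bigr).
\]
To bound the left-hand side I would proceed $\xi$ by $\xi$. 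Each element $\xi(f_1,\dots,f_n)\in E$ typically lies outside $E_0$, so $\omega$ cannot read its value directly. To circumvent this I invoke the standard Archimedean extension principle that every real-valued vector lattice homomorphism on a finitely generated sublattice extends to a real-valued vector lattice homomorphism on any larger finitely generated sublattice (see \cite{BusvR}): extend $\omega$ to $\tilde\omega\in H\bigl(\langle E_0,\xi(f_1,\dots,f_n)\rangle\bigr)$. Since $v\leq\xi(f_1,\dots,f_n)$ in $E$, applying Theorem~\ref{T: phi hom interchange avl} to $\tilde\omega$ gives
\[
\omega(v)=\tilde\omega(v)\leq\tilde\omega\bigl(\xi(f_1,\dots,f_n)\bigr)=\xi\bigl(\omega(f_1),\dots,\omega(f_n)\bigr).
\]
Taking the infimum over $\xi\in\Xi$ and using $h\leq\phi_0$ pointwise,
\[
\omega(v)\leq h\bigl(\omega(f_1),\dots,\omega(f_n)\bigr)\leq\phi_0\bigl(\omega(f_1),\dots,\omega(f_n)\bigr)=\omega\bigl(\phi_0(f_1,\dots,f_n)\bigr),
\]
which is the required inequality.

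The main obstacle is the homomorphism-extension step: without it, a homomorphism defined only on $E_0$ carries no direct information about the elements $\xi(f_1,\dots,f_n)$ lying outside $E_0$, and chasing the data through a single such $\omega$ is precisely what is needed to compare $v$ with $\phi_0(f_1,\dots,f_n)$. Once that extension is secured, the rest of the argument is a routine chaining of Theorem~\ref{T: phi hom interchange avl} with the point-separation property of $H$ on finitely generated sublattices.
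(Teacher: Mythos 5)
Your argument is correct and follows the same overall strategy as the paper's proof: reduce the claim to $v\leq\phi_0(f_1,\dots,f_n)$ for each fixed $\phi_0\in\Phi$, pass to a finitely generated sublattice $E_0$, and combine the point-separation property of $H(E_0)$ with the interchange theorem. The one place you genuinely diverge is the step where a single $\omega\in H(E_0)$ must satisfy $\omega(v)\leq\xi\bigl(\omega(f_1),\dots,\omega(f_n)\bigr)$ for \emph{every} $\xi\in\Xi$, even though the elements $\xi(f_1,\dots,f_n)$ do not lie in $E_0$; the paper's own proof passes to the infimum over the whole family at the analogous point without comment, whereas you supply a justification via homomorphism extension, which is a genuine gain in care. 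Two caveats. First, the extension principle as you state it (extendability to an arbitrary larger finitely generated sublattice) is not the standard result and is not obviously true in that generality; the correct tool is the Lipecki--Luxemburg--Schep theorem, which extends a lattice homomorphism into a Dedekind complete range (here $\R$) from a \emph{majorizing} sublattice. You therefore need to observe that $E_0$ majorizes the sublattice generated by $E_0\cup\{\xi(f_1,\dots,f_n)\}$; this holds because the estimate in the proof of Proposition~\ref{P: dc implies phic} gives $|\xi(f_1,\dots,f_n)|\leq C\sum_{i=1}^{n}|f_i|$ for a constant $C$ depending only on $\xi$, and $\sum_{i=1}^{n}|f_i|\in E_0$. (Alternatively, you could avoid extensions entirely by approximating $\xi(f_1,\dots,f_n)$ from within $E_0$ by finite suprema $\bigvee_{j}\sum_{i}a^j_if_i$ as in the proof of Theorem~\ref{T: phi hom interchange}.) Second, your appeal to Corollary~\ref{C: subspace invariant} is pointed the wrong way---that corollary passes from existence of the supremum in $E_0$ to existence in $E$---but the direction you actually need (a supremum computed in $E$ that happens to lie in $E_0$ is also the supremum in $E_0$) is immediate, so nothing substantive is lost.
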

	
	\begin{proof}
		We only prove part (i) of the proposition, noting that the proof of part (ii) is similar. To this end, suppose $h$ is upper semicontinuous and has inf-sublinear representations
		\[	h(\bar{x})=\underset{\phi\in\Phi}{\inf}\ \phi(\bar{x})\quad \text{and}\quad h(\bar{x})=\underset{\xi\in\Xi}{\inf}\ \xi(\bar{x})\quad (\bar{x}\in\R^n).
		\]
		Fix $\phi_0\in\Psi$ and $\xi_0\in\Xi$, and let $E_0$ be the vector sublattice of $E$ generated by
		\[
		\left\{f_1,\dots,f_n, \phi_0(f_1,\dots,f_n), \xi_0(f_1,\dots,f_n), \underset{\phi\in\Phi}{\inf}\ \phi(f_1,\dots,f_n), \underset{\xi\in\Xi}{\inf}\ \xi(f_1,\dots,f_n)\right\}.
		\]
		Next let $\omega\colon E_0\to\R$ be a vector lattice homomorphism. Using Proposition~\ref{T: phi hom interchange} in the equality below, we have
		\[
		\omega\left(\underset{\phi\in\Phi}{\inf}\ \phi(f_1,\dots,f_n)\right)\leq \omega\bigl(\phi_0(f_1,\dots,f_n)\bigr)=\phi_0\bigl(\omega(f_1),\dots,\omega(f_n)\bigr).
		\]
		It follows that
		\[
		\omega\left(\underset{\phi\in\Phi}{\inf}\ \phi(f_1,\dots,f_n)\right)\leq \underset{\phi\in\Phi}{\inf}\ \phi\bigl(\omega(f_1),\dots,\omega(f_n)\bigr).
		\]
		Therefore, utilizing Proposition~\ref{T: phi hom interchange} again in the second equality below, we obtain
		\begin{align*}
			\omega\left(\underset{\phi\in\Phi}{\inf}\ \phi(f_1,\dots,f_n)\right)&\leq \underset{\phi\in\Phi}{\inf}\ \phi\bigl(\omega(f_1),\dots,\omega(f_n)\bigr)\\
			&=\underset{\xi\in\Xi}{\inf}\ \xi\bigl(\omega(f_1),\dots,\omega(f_n)\bigr)\\
			&\leq \xi_0\bigl(\omega(f_1),\dots,\omega(f_n)\bigr)\\
			&=\omega\bigl(\xi_0(f_1,\dots,f_n)\bigr).
		\end{align*}
		Since the set of all vector lattice homomorphisms $\omega\colon E_0\to\R$ separate the points of $E_0$ \cite[remark 1.2(ii), Theorem~2.2]{BusvR}, we have that
		\[
		\underset{\phi\in\Phi}{\inf}\ \phi(f_1,\dots,f_n)\leq \xi_0(f_1,\dots,f_n).
		\]
		Given that $\xi_0\in\Xi$ was arbitrary, it follows that $\underset{\phi\in\Phi}{\inf}\ \phi(f_1,\dots,f_n)\leq \underset{\xi\in\Xi}{\inf}\ \xi_0(f_1,\dots,f_n)$. However, a symmetrical argument shows that $\underset{\xi\in\Xi}{\inf}\ \xi_0(f_1,\dots,f_n)\leq \underset{\phi\in\Phi}{\inf}\ \phi(f_1,\dots,f_n)$. We thus obtain $\underset{\phi\in\Phi}{\inf}\ \phi(f_1,\dots,f_n)=\underset{\xi\in\Xi}{\inf}\ \xi_0(f_1,\dots,f_n)$. 
	\end{proof}
	
	By Proposition~\ref{P: DR}, a continuous positively homogeneous function $h\colon\R^n\to\R$, being upper semicontinuous, lower semicontinuous, and bounded on the unit sphere, has both an inf-sublinear representation and a sup-superlinear representation. This fact raises the question of well-definedness regarding the semicontinuous Dedekind complete vector lattice functional calculus applied to continuous positively homogeneous functions. Our next proposition settles this issue.
	
	\begin{proposition}\label{P: cts well defined}
		Fix $n\in\N$. Let $h\colon\R^n\to\R$ be a continuous positively homogeneous function, and suppose that
		\begin{equation}\label{eq: DC ex1 cts}
			h(\bar{x})=\underset{\phi\in\Phi}{\inf}\ \phi(\bar{x})\qquad (\bar{x}\in\R^n)
		\end{equation}
		and
		\begin{equation}\label{eq: DC ex2 cts}
			h(\bar{x})=\underset{\psi\in\Psi}{\sup}\ \psi(\bar{x})\qquad (\bar{x}\in\R^n)
		\end{equation}
		hold. If $E$ is a Dedekind complete vector lattice and $f_1,\dots,f_n\in E$, then
		\[
		\underset{\phi\in\Phi}{\inf}\ \phi(f_1,\dots,f_n) = \underset{\psi\in\Psi}{\sup}\ \psi(f_1,\dots,f_n).
		\]
	\end{proposition}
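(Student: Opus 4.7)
The plan is to prove both inequalities $L \leq U$ and $L \geq U$, where I set $L := \underset{\phi \in \Phi}{\inf}\ \phi(f_1,\dots,f_n)$ and $U := \underset{\psi \in \Psi}{\sup}\ \psi(f_1,\dots,f_n)$; both exist in $E$ by Theorem~\ref{T: dc implies hc}.

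For the inequality $L \geq U$ I would follow the template already in use in the proof of Theorem~\ref{T: dc implies hc}. Fix arbitrary $\phi \in \Phi$ and $\psi \in \Psi$. Since $\phi(\bar{x}) \geq h(\bar{x}) \geq \psi(\bar{x})$ for every $\bar{x} \in \R^n$, for any real-valued vector lattice homomorphism $\omega$ on the vector sublattice of $E$ generated by $\{f_1,\dots,f_n,\phi(f_1,\dots,f_n),\psi(f_1,\dots,f_n)\}$, Theorem~\ref{T: phi hom interchange} gives
\[
\omega\bigl(\phi(f_1,\dots,f_n)\bigr) = \phi\bigl(\omega(f_1),\dots,\omega(f_n)\bigr) \geq \psi\bigl(\omega(f_1),\dots,\omega(f_n)\bigr) = \omega\bigl(\psi(f_1,\dots,f_n)\bigr).
\]
Point separation from \cite[Remark~1.2(ii), Theorem~2.2]{BusvR} then yields $\phi(f_1,\dots,f_n) \geq \psi(f_1,\dots,f_n)$ in $E$, and taking the infimum over $\phi \in \Phi$ and the supremum over $\psi \in \Psi$ delivers $L \geq U$.

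For the reverse inequality $L \leq U$ I would imitate the proof of Proposition~\ref{P: welldefined}. Fix $\phi_0 \in \Phi$ and $\psi_0 \in \Psi$, and let $E_0$ be the vector sublattice of $E$ generated by the finite set $\{f_1,\dots,f_n,\phi_0(f_1,\dots,f_n),\psi_0(f_1,\dots,f_n),L,U\}$. For any vector lattice homomorphism $\omega \colon E_0 \to \R$, Theorem~\ref{T: phi hom interchange} together with $L \leq \phi_0(f_1,\dots,f_n)$ and $U \geq \psi_0(f_1,\dots,f_n)$ gives
\[
\omega(L) \leq \phi_0\bigl(\omega(f_1),\dots,\omega(f_n)\bigr) \quad\text{and}\quad \omega(U) \geq \psi_0\bigl(\omega(f_1),\dots,\omega(f_n)\bigr).
\]
Because $\phi_0 \in \Phi$ and $\psi_0 \in \Psi$ were arbitrary choices, the same manoeuvre that drives the ``it follows'' step in Proposition~\ref{P: welldefined} lets me replace the right-hand sides by the infimum over $\Phi$ and the supremum over $\Psi$, respectively; combining with the representations \eqref{eq: DC ex1 cts} and \eqref{eq: DC ex2 cts} evaluated at $\bar{x} = \bigl(\omega(f_1),\dots,\omega(f_n)\bigr)$ yields
\[
\omega(L) \leq \underset{\phi \in \Phi}{\inf}\ \phi\bigl(\omega(f_1),\dots,\omega(f_n)\bigr) = h\bigl(\omega(f_1),\dots,\omega(f_n)\bigr) = \underset{\psi \in \Psi}{\sup}\ \psi\bigl(\omega(f_1),\dots,\omega(f_n)\bigr) \leq \omega(U).
\]
A second appeal to point separation on the finitely generated $E_0$ then produces $L \leq U$.

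The only delicate step is the transition from the fixed pair $(\phi_0,\psi_0)$ (baked into the generators of $E_0$) to arbitrary members of $\Phi$ and $\Psi$ in the infimum/supremum; since this is exactly the step already executed in Proposition~\ref{P: welldefined}, it poses no new obstacle here. Combining $L \geq U$ with $L \leq U$ completes the proof.
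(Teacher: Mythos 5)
Your proof is correct; the first half (the inequality $\inf_{\phi\in\Phi}\phi(f_1,\dots,f_n)\geq\sup_{\psi\in\Psi}\psi(f_1,\dots,f_n)$) is exactly the paper's, but for the reverse inequality you take a genuinely different route. The paper does not scalarize a second time: it evaluates \eqref{eq: DC ex1 cts} and \eqref{eq: DC ex2 cts} at the basis vectors $\overline{e^{(i)}}$ to extract, for $\epsilon>0$, a pair $(\phi_0,\psi_0)$ with $0\leq\sup_{\bar{a}\in\underline{\partial}\phi_0(\bar{0})}a_i-\inf_{\bar{b}\in\overline{\partial}\psi_0(\bar{0})}b_i\leq\epsilon/n$, deduces the order estimate $\bigl|\inf_{\phi}\phi(f_1,\dots,f_n)-\sup_{\psi}\psi(f_1,\dots,f_n)\bigr|\leq\epsilon\sum_{i=1}^{n}|f_i|$, and finishes with the Archimedean property; you instead reduce $L\leq U$ to the scalar identity $\inf_{\phi}\phi=h=\sup_{\psi}\psi$ through a real-valued lattice homomorphism on a finitely generated sublattice and point separation. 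The paper's route is quantitative (it exhibits relative uniform closeness of the two candidates without a further appeal to the separation theorem), but it needs a single pair $(\phi_0,\psi_0)$ that is $\epsilon/n$-optimal in all $n$ coordinates simultaneously, whereas its displayed choice is made coordinate by coordinate and $\Phi,\Psi$ need not be directed; your route avoids that selection entirely and is uniform with the template of Theorem~\ref{T: dc implies hc} and Proposition~\ref{P: welldefined}. The one step you should spell out is the ``delicate'' passage from the fixed generator $\phi_0$ to arbitrary $\phi\in\Phi$: since $\phi(f_1,\dots,f_n)$ need not lie in $E_0$, you cannot apply $\omega$ to it directly, but the finite-subcover estimate inside the proof of Theorem~\ref{T: phi hom interchange} gives $L\leq\phi(f_1,\dots,f_n)\leq\bigvee_{j=1}^{k}\sum_{i=1}^{n}a^{j}_{i}f_i+\tfrac{1}{2m}\sum_{i=1}^{n}|f_i|$ with the right-hand side in $E_0$, so applying $\omega$ and letting $m\to\infty$ yields $\omega(L)\leq\phi\bigl(\omega(f_1),\dots,\omega(f_n)\bigr)$ for every $\phi\in\Phi$ (and dually for $\Psi$). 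That is the same tacit step as in Proposition~\ref{P: welldefined}, so your citation of it is legitimate, but making it explicit would render the argument fully self-contained.
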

	
	\begin{proof}
		Let $E$ be a Dedekind complete vector lattice, and put $f_1,\dots,f_n\in E$. Fix $\phi_0\in\Phi, \psi_0\in\Psi$. From \eqref{eq: DC ex1 cts} and \eqref{eq: DC ex2 cts}, we have for any $\bar{x}\in\R^n$,
		\[
		\psi_0(\bar{x})\leq h(\bar{x})\leq\phi_0(\bar{x}).
		\]
		Let $E_0$ be the vector sublattice of $E$ generated by $\{f_1,\dots, f_n, \psi_0(f_1,\dots, f_n), \phi_0(f_1,\dots, f_n)\}$, and let $\omega\colon E_0\to\mathbb{R}$ be a vector lattice homomorphism. Using Theorem~\ref{T: phi hom interchange} in both identities below, we have
		\begin{align*}
			\omega\bigl(\psi_0(f_1,\dots, f_n)\bigr)&=\psi_0\Bigl(\omega(f_1),\dots, \omega(f_n)\Bigr)\\
			&\leq\phi_0\Bigl(\omega(f_1),\dots, \omega(f_n)\Bigr)\\
			&=\omega\bigl(\phi_0(f_1,\dots, f_n)\bigr).
		\end{align*}
		Since the set of all vector lattice homomorphisms $\omega\colon E_0\to\R$ separate the points of $E_0$ \cite[remark 1.2(ii), Theorem~2.2]{BusvR}, we have that $\psi_0(f_1,\dots, f_n)\leq\phi_0(f_1,\dots, f_n)$. It follows that
		\begin{equation}\label{eq: inf>sup}
			\underset{\phi\in\Phi}{\inf}\ \phi(f_1,\dots,f_n) \geq \underset{\psi\in\Psi}{\sup}\ \psi(f_1,\dots,f_n).
		\end{equation}
		
		Next fix $\epsilon>0$, and let $i\in\{1,\dots,n\}$. Using both of the above expressions \eqref{eq: DC ex1 cts} and \eqref{eq: DC ex2 cts} with $\bar{x}=\overline{e^{(i)}}$, we get
		\[
		h\left(\overline{e^{(i)}}\right)=\underset{\phi\in\Phi}{\inf}\ \phi\left(\overline{e^{(i)}}\right)=\underset{\psi\in\Psi}{\sup}\ \psi\left(\overline{e^{(i)}}\right)=\underset{\phi\in\Phi}{\inf}\ \underset{\bar{a}\in\underline{\partial}\phi(\bar{0})}{\sup}a_i=\underset{\psi\in\Psi}{\sup}\ \underset{\bar{b}\in\overline{\partial}\psi(\bar{0})}{\inf}b_i.
		\]
		Ergo, there exists $\phi_0\in\Phi$ and $\psi_0\in\Psi$ such that
		\[
		0\leq\underset{\bar{a}\in\underline{\partial}\phi_0(\bar{0})}{\sup}a_i - \underset{\bar{b}\in\overline{\partial}\psi_0(\bar{0})}{\inf}b_i\leq\dfrac{\epsilon}{n}.
		\]
		Also note that
		\begin{align*}
			\underset{\bar{a}\in\underline{\partial}\phi_0(\bar{0})}{\sup}a_i - \underset{\bar{b}\in\overline{\partial}\psi_0(\bar{0})}{\inf}b_i&=\underset{\bar{a}\in\underline{\partial}\phi_0(\bar{0})}{\sup}a_i + \underset{\bar{b}\in\overline{\partial}\psi_0(\bar{0})}{\sup}(-b_i)\\
			&=\underset{\bar{a}\in\underline{\partial}\phi_0(\bar{0}), \bar{b}\in\overline{\partial}\psi_0(\bar{0})}{\sup}(a_i-b_i),
		\end{align*}
		and so we have
		\[
		0\leq \underset{\bar{a}\in\underline{\partial}\phi_0(\bar{0}), \bar{b}\in\overline{\partial}\psi_0(\bar{0})}{\sup}(a_i-b_i)\leq\dfrac{\epsilon}{n}.
		\]
		It follows that
		\begin{align*}
			\left|\underset{\bar{a}\in\underline{\partial}\phi_0(\bar{0})}{\sup}\sum_{i=1}^{n}a_if_i - \underset{\bar{b}\in\overline{\partial}\psi_0(\bar{0})}{\inf}\sum_{i=1}^{n}b_if_i\right|&=\left|\underset{\bar{a}\in\underline{\partial}\phi_0(\bar{0})}{\sup}\sum_{i=1}^{n}a_if_i + \underset{\bar{b}\in\overline{\partial}\psi_0(\bar{0})}{\sup}\sum_{i=1}^{n}(-b_i)f_i\right|\\
			&=\left|\underset{\bar{a}\in\underline{\partial}\phi_0(\bar{0}), \bar{b}\in\overline{\partial}\psi_0(\bar{0})}{\sup}\sum_{i=1}^{n}(a_i-b_i)f_i\right|\\
			&\leq\sum_{i=1}^{n}\left(\underset{\bar{a}\in\underline{\partial}\phi_0(\bar{0}), \bar{b}\in\overline{\partial}\psi_0(\bar{0})}{\sup}(a_i-b_i)\right)|f_i|\\
			&\leq\epsilon\left(\sum_{i=1}^{n}|f_i|\right).
		\end{align*}
		Thus, using \eqref{eq: inf>sup} in the first identity below, we have
		\begin{align*}
			\left|\underset{\phi\in\Phi}{\inf}\ \phi(f_1,\dots,f_n)-\underset{\psi\in\Psi}{\sup}\ \psi(f_1,\dots,f_n)\right|&=\underset{\phi\in\Phi}{\inf}\ \phi(f_1,\dots,f_n)-\underset{\psi\in\Psi}{\sup}\ \psi(f_1,\dots,f_n)\\
			&=\underset{\phi\in\Phi}{\inf}\ \underset{\bar{a}\in\underline{\partial}\phi(\bar{0})}{\sup}\ \sum_{i=1}^{n}a_if_i - \underset{\psi\in\Psi}{\sup}\ \underset{\bar{b}\in\overline{\partial}\psi(\bar{0})}{\inf}\ \sum_{i=1}^{n}b_if_i\\
			&\leq \underset{\bar{a}\in\underline{\partial}\phi_0(\bar{0})}{\sup}\ \sum_{i=1}^{n}a_if_i - \underset{\bar{b}\in\overline{\partial}\psi_0(\bar{0})}{\inf}\ \sum_{i=1}^{n}b_if_i\\
			&\leq\left|\underset{\bar{a}\in\underline{\partial}\phi_0(\bar{0})}{\sup}\ \sum_{i=1}^{n}a_if_i - \underset{\bar{b}\in\overline{\partial}\psi_0(\bar{0})}{\inf}\ \sum_{i=1}^{n}b_if_i\right|\\
			&\leq\epsilon\left(\sum_{i=1}^{n}|f_i|\right).
		\end{align*}
		The proposition now follows, since $E$ is Archimedean.
	\end{proof}
	
	Another issue of well-definedness for the semicontinuous Dedekind complete vector lattice calculus concerns the relevant calculations in different Dedekind complete vector lattices, where one vector lattice is a vector sublattice of the other. Specifically, suppose that $n\in\N$, $h\colon\R^n\to\R$ is a semicontinuous positively homogeneous function that is bounded on the unit sphere, $E$ is a Dedekind complete vector lattice, $E_0$ is a Dedekind complete vector sublattice of $E$, and $f_1,\dots,f_n\in E_0$. Our next result states that the value of $h(f_1,\dots,f_n)$ remains the same whether the semicontinuous Dedekind complete vector lattice functional calculus is performed in $E_0$ or $E$.

	\begin{proposition}\label{P: DC subspace well-defined} Fix $n\in\N$, let $E$ be a Dedekind complete vector lattice, and suppose $E_0$ is a Dedekind complete vector sublattice of $E$. Put $f_1,\dots,f_n\in E_0$. Suppose $h\colon\R^n\to\R$ is a positively homogeneous function which is bounded on the unit sphere.
		\begin{itemize}
			\item[(i)] If $h$ is upper semicontinuous with inf-sublinear representation 
			\[
			h(\bar{x})=\underset{\phi\in\Phi}{\inf}\ \phi(\bar{x})\quad (\bar{x}\in\R^n),
			\]
			then
			\[
			E_0-\underset{\phi\in\Phi}{\inf}\ \phi(f_1,\dots,f_n)=E-\underset{\phi\in\Phi}{\inf}\ \phi(f_1,\dots,f_n).
			\]
			\item[(ii)] If $h$ is lower semicontinuous with sup-superlinear representation 
			\[
			h(\bar{x})=\underset{\psi\in\Psi}{\sup}\ \psi(\bar{x})\quad (\bar{x}\in\R^n),
			\]
			then
			\[
			E_0-\underset{\psi\in\Psi}{\sup}\ \psi(f_1,\dots,f_n)=E-\underset{\psi\in\Psi}{\sup}\ \psi(f_1,\dots,f_n).
			\]
		\end{itemize}
		In summary, the value of $h(f_1,\dots,f_n)$ remains the same whether its associated infimum (supremum) in its inf-sublinear (sup-superlinear) representation is calculated in $E_0$ or $E$. 
	\end{proposition}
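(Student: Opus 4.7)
The plan is to mirror the sublattice-invariance argument for sublinear maps in Corollary~\ref{C: subspace invariant} (applied to the inclusion $T\colon E_0\hookrightarrow E$), but now at the level of the outer infimum rather than each sublinear piece. I treat part~(i); part~(ii) is handled symmetrically by interchanging the roles of infima/suprema and sublinear/superlinear maps.

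By Corollary~\ref{C: DC phi subspace invariant}, for each $\phi\in\Phi$ the element $\phi(f_1,\dots,f_n)$ is the same whether computed inside $E_0$ or inside $E$, so I am really comparing two infima of the common set $S:=\{\phi(f_1,\dots,f_n):\phi\in\Phi\}\subseteq E_0\subseteq E$. Write $g_0:=E_0\text{-}\inf_\phi\phi(f_1,\dots,f_n)$ and $g:=E\text{-}\inf_\phi\phi(f_1,\dots,f_n)$; both exist by Theorem~\ref{T: dc implies hc} applied respectively to $E_0$ and $E$. The inequality $g_0\le g$ is immediate because $g_0\in E_0\subseteq E$ is already a lower bound for $S$ in $E$, and $g$ is by definition the greatest such lower bound.

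For the reverse direction, I reduce to a separation statement. Let $F$ be the vector sublattice of $E$ generated by $\{f_1,\dots,f_n,g_0,g\}\cup\{\phi(f_1,\dots,f_n):\phi\in\Phi\}$. By \cite[Remark~1.2(ii), Theorem~2.2]{BusvR}, real-valued vector lattice homomorphisms on $F$ separate its points, so it is enough to show $\omega(g)\le\omega(g_0)$ for every vector lattice homomorphism $\omega\colon F\to\R$. Each $\phi(f_1,\dots,f_n)$ lies in $F$ and is the supremum computing it already within $F$ (any upper bound for its defining set in $F\subseteq E$ dominates the supremum in $E$), so Theorem~\ref{T: phi hom interchange avl} applied to $\omega$ gives
\[
\omega\bigl(\phi(f_1,\dots,f_n)\bigr)=\phi\bigl(\omega(f_1),\dots,\omega(f_n)\bigr)\qquad(\phi\in\Phi).
\]
From $g\le\phi(f_1,\dots,f_n)$ for every $\phi$, we deduce $\omega(g)\le\inf_\phi\phi(\omega(f_1),\dots,\omega(f_n))=h(\omega(f_1),\dots,\omega(f_n))$, and the same scalar upper bound holds for $\omega(g_0)$.

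The main obstacle is to promote these two pointwise bounds into the sharp inequality $\omega(g)\le\omega(g_0)$, since vector lattice homomorphisms do not in general preserve infinite infima and so neither side is automatically equal to $h(\omega(f_1),\dots,\omega(f_n))$. The plan here is to exploit the finite-meet description of the infimum in the Dedekind complete lattice $E_0$: $g_0$ is the infimum of the downward-directed net of meets $\phi_{i_1}(f_1,\dots,f_n)\wedge\cdots\wedge\phi_{i_k}(f_1,\dots,f_n)$, and since $\omega$ preserves finite meets, the images of these elements form a decreasing net in $\R$ whose infimum is $h(\omega(f_1),\dots,\omega(f_n))$. Combining this with the $\varepsilon$-approximation of $h(\omega(f_1),\dots,\omega(f_n))$ by individual scalars $\phi_\varepsilon(\omega(f_1),\dots,\omega(f_n))$ and with the fact that $g_0$ is the \emph{greatest} lower bound for $S$ inside $E_0$ (so that no strictly larger element of $E_0$ can still lie below all of $S$) will force $\omega(g)\le\omega(g_0)$ for every $\omega$; separation in $F$ then yields $g\le g_0$ and completes part~(i). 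Part~(ii) follows by the obvious dualization using the sup-superlinear representation in place of the inf-sublinear one.
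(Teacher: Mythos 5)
Your first inequality, $E_0-\underset{\phi\in\Phi}{\inf}\ \phi(f_1,\dots,f_n)\leq E-\underset{\phi\in\Phi}{\inf}\ \phi(f_1,\dots,f_n)$, is established exactly as in the paper: by Corollary~\ref{C: DC phi subspace invariant} the set $S$ is unambiguous, and the $E_0$-infimum is a lower bound for $S$ in $E$. The reverse inequality is where your proposal stops being a proof. The closing paragraph is an announced plan, and the plan cannot be carried out: for a real-valued vector lattice homomorphism $\omega$ on $F$, preservation of finite meets together with Theorem~\ref{T: phi hom interchange avl} yields only $\omega(g)\leq h\bigl(\omega(f_1),\dots,\omega(f_n)\bigr)$ and $\omega(g_0)\leq h\bigl(\omega(f_1),\dots,\omega(f_n)\bigr)$. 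Since $\omega$ need not preserve either infinite infimum, neither value is pinned down by these bounds, and nothing compares $\omega(g)$ with $\omega(g_0)$. The maximality of $g_0$ among lower bounds of $S$ \emph{in $E_0$} is a property of the entire order of $E_0$; a homomorphism defined on the sublattice $F$ cannot detect it, so it cannot be used to force $\omega(g)\leq\omega(g_0)$.

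The missing inequality $g\leq g_0$ is in fact the entire content of the proposition: it asserts that the inclusion $E_0\hookrightarrow E$ preserves the infimum of $S$, and a Dedekind complete vector sublattice of a Dedekind complete vector lattice need not preserve infinite infima. Concretely, let $\mathcal{U}$ be a free ultrafilter on $\N$, let $E=\R^{\N}\times\R$, and let $E_0=\{(x,\lim_{\mathcal{U}}x)\ :\ x\in\ell^{\infty}\}$, which is a vector sublattice of $E$ lattice isomorphic to $\ell^{\infty}$ and hence Dedekind complete. Take $h$ and $\Phi=\{(x,y)\mapsto(mx+ny)^{+}\ :\ m,n\in\N\}$ from Example~\ref{E: ex1}, $u=(1,1,1,\dots;1)\in E_0$ and $v=(-1,-\tfrac{1}{2},-\tfrac{1}{3},\dots;0)\in E_0$. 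Then $\phi_{m,n}(u,v)=(mu+nv)^{+}$ has last coordinate $m$, so the infimum of the family computed in $E$ (coordinatewise) has last coordinate $1$, whereas the infimum computed in $E_0$ is the image of the pointwise infimum $0$ in $\ell^{\infty}$ and has last coordinate $0$. Thus $g\leq g_0$ genuinely fails under the stated hypotheses, and no separation argument can rescue it; some regularity of the embedding $E_0\subseteq E$ (for instance order continuity of the inclusion, which holds for ideals and order dense sublattices) is needed beyond Dedekind completeness of $E_0$. For what it is worth, the paper's own proof dismisses this direction with the single assertion that it follows from $E_0$ being a vector sublattice of $E$, so your instinct that this is the step requiring real work is sound --- but the argument you sketch does not supply it.
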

	
	\begin{proof}
		We only prove (i), noting that (ii) is handled in a symmetrical manner. To this end, fix $\phi_0\in\Phi$. In $E_0$ we have $E_0-\underset{\phi\in\Phi}{\inf}\ \phi(f_1,\dots,f_n)\leq \phi_0(f_1,\dots,f_n)$, and we add that, by Corollary~\ref{C: DC phi subspace invariant}, the value of $\phi_0(f_1,\dots,f_n)$ remains the same wither its associated supremum is calculated in $E_0$ or $E$. Thus we obtain
		$E_0-\underset{\phi\in\Phi}{\inf}\ \phi(f_1,\dots,f_n)\leq E-\underset{\phi\in\Phi}{\inf}\ \phi(f_1,\dots,f_n)$. On the other hand, the fact that $E_0$ is a vector sublattice of $E$ implies that $E_0-\underset{\phi\in\Phi}{\inf}\ \phi(f_1,\dots,f_n)\geq E-\underset{\phi\in\Phi}{\inf}\ \phi(f_1,\dots,f_n)$. We thus have
		\[
		E_0-\underset{\phi\in\Phi}{\inf}\ \phi(f_1,\dots,f_n)= E-\underset{\phi\in\Phi}{\inf}\ \phi(f_1,\dots,f_n).
		\]
	\end{proof}
	
	In conclusion of this section, we have introduced the semicontinuous Dedekind complete vector lattice functional calculus and proven that it is well-defined.
	
	\section{An Archimedean Vector Lattice Functional Calculus for Semicontinuous Positively Homogeneous Functions}\label{S: non DC h fun cal}
	
	In this section we extend, when realizable, the semicontinuous Dedekind complete vector lattice functional calculus from Section~\ref{S: h fun cal} to more general Archimedean vector lattices. We begin with an example which illustrates that, while Dedekind completeness is a sufficient condition for us to execute our functional calculus in an Archimedean vector lattice, it is not a necessary condition.
	
	\begin{example}\label{E: step fcns}
		Consider the Archimedean vector lattice $S[0,1]$ of all real-valued step functions defined on $[0,1]$. For any $n\in\N$, $f_1,\dots,f_n\in S[0,1]$, and $k\in\{1,\dots,n\}$, we can express $f_k$ in the form $f_k(t)=\sum_{i=1}^{n}\lambda_{k,i}\chi_{A_i}(t)\quad \bigl(t\in[0,1]\bigr)$, for some $\lambda_{k,1},\dots,\lambda_{k,n}\in\R$. Given any upper (or lower) semicontinuous positively homogeneous function $h\colon\R^n\to\R$ bounded on the unit sphere, it is readily checked that $h(f_1,\dots,f_n)$ is defined in $S[0,1]$ and 
		\[
		\bigl(h(f_1,\dots,f_n)\bigr)(t)=\sum_{i=1}^{n}h(\lambda_{1,i},\dots,\lambda_{n,i})\chi_{A_i}(t)
		\]
		for each $t\in[0,1]$. However, $S[0,1]$ is not Dedekind complete. (It is not even uniformly complete.)
	\end{example}
	
	Using Example~\ref{E: step fcns} as motivation, we expand our semicontinuous Dedekind complete vector lattice functional calculus as follows.
	
	\begin{definition}\label{D: funcal}
		Let $n\in\N$, and assume that $h\colon\R^n\to\R$ is an upper semicontinuous positively homogeneous function that that is bounded on the unit sphere. Furthermore, let $\Phi$ be a family of sublinear maps such that for each $\bar{x}\in\R^n$, we have $h(\bar{x})=\underset{\phi\in\Phi}{\inf}\ \phi(\bar{x})$.
		Next consider an Archimedean vector lattice $E$, and put $f_1,\dots,f_n\in E$. If
		\begin{itemize}
			\item[(i)] for every $\phi\in\Phi$, $\phi(f_1,\dots,f_n)$ is defined in $E$, and
			\item[(ii)] $\underset{\phi\in\Phi}{\inf}\ \phi(f_1,\dots,f_n)$ exists in $E$,
		\end{itemize}	
		then we say $h(f_1,\dots,f_n)$ \textit{is defined in} $E$ and write
		\[
		h(f_1,\dots,f_n):=\underset{\phi\in\Phi}{\inf}\ \phi(f_1,\dots,f_n).
		\]
		Likewise, consider a lower semicontinuous positively homogeneous function $h\colon\R^n\to\R$ that is bounded on the unit sphere. Let $\Psi$ be a family of real-valued superlinear maps defined on $\R^n$ such that for all $\bar{x}\in\R^n$, $h(\bar{x})=\underset{\psi\in\Psi}{\sup}\ \psi(\bar{x})$.
		If
		\begin{itemize}
			\item[(i)] for every $\psi\in\Psi$, $\psi(f_1,\dots,f_n)$ is defined in $E$, and
			\item[(ii)] $\underset{\psi\in\Psi}{\sup}\ \phi(f_1,\dots,f_n)$
			exists in $E$,
		\end{itemize}
		then we say $h(f_1,\dots,f_n)$ \textit{is defined in} $E$ and write
		\[
		h(f_1,\dots,f_n):=\underset{\psi\in\Psi}{\sup}\ \psi(f_1,\dots,f_n).
		\]
		We refer to this functional calculus as the \textit{semicontinuous Archimedean vector lattice functional calculus}.
	\end{definition}

In order to address the various issues of well-definedness in the semicontinuous Archimedean
vector lattice functional calculus, it proves convenient to utilize the results which address these issues in the Dedekind complete vector lattice case in Section~\ref{S: h fun cal} in the Dedekind completion of an Archimedean vector lattice. For this reason, we first address the issue of these functional calculus calculations in different vector lattices, where one vector lattice is a vector sublattice of the other.

	\begin{proposition}\label{P: subspace well-defined} Let $n\in\N$, let $E$ be an Archimedean vector lattice, and assume $E_0$ is a vector sublattice of $E$. Put $f_1,\dots,f_n\in E_0$. Suppose $h\colon\R^n\to\R$ is a positively homogeneous function which is bounded on the unit sphere.
		\begin{itemize}
			\item[(i)] Assume $h$ is upper semicontinuous with inf-sublinear representation 
			\[
			h(\bar{x})=\underset{\phi\in\Phi}{\inf}\ \phi(\bar{x})\qquad (\bar{x}\in\R^n).
			\]
			If for all $\phi\in\Phi$, $\phi(f_1,\dots,f_n)$ is defined in $E$, and
			\[
			E_0-\underset{\phi\in\Phi}{\inf}\ \phi(f_1,\dots,f_n)
			\]
			exists in $E_0$, then
			\[
			E-\underset{\phi\in\Phi}{\inf}\ \phi(f_1,\dots,f_n)
			\]
			exists in $E$, and
			\[
			E_0-\underset{\phi\in\Phi}{\inf}\ \phi(f_1,\dots,f_n)=E-\underset{\phi\in\Phi}{\inf}\ \phi(f_1,\dots,f_n).
			\]
			\item[(ii)] Suppose $h$ is lower semicontinuous with sup-superlinear representation 
			\[
			h(\bar{x})=\underset{\psi\in\Psi}{\sup}\ \psi(\bar{x})\qquad (\bar{x}\in\R^n).
			\]
			If for each $\psi\in\Psi$, $\psi(f_1,\dots,f_n)$ is defined in $E$, and
			\[
			E_0-\underset{\psi\in\Psi}{\sup}\ \psi(f_1,\dots,f_n)
			\]
			exists in $E_0$, then
			\[
			E-\underset{\psi\in\Psi}{\sup}\ \psi(f_1,\dots,f_n)
			\]
			exists in $E$, and
			\[
			E_0-\underset{\psi\in\Psi}{\sup}\ \psi(f_1,\dots,f_n)=E-\underset{\psi\in\Psi}{\sup}\ \psi(f_1,\dots,f_n).
			\]
		\end{itemize}
		In summary, the value of $h(f_1,\dots,f_n)$ remains the same whether its associated supremum (infimum) in its inf-sublinear (sup-superlinear) representation is calculated in $E_0$ or $E$. 
	\end{proposition}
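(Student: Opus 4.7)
The plan is to prove part (i); part (ii) follows by a symmetric argument. Set $g_0 := E_0 - \underset{\phi \in \Phi}{\inf}\ \phi(f_1,\dots,f_n) \in E_0$, which exists by hypothesis, and I would work inside the Dedekind completion $E^\delta$ of $E$.

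First, the use of the $E_0 - \inf$ notation forces each $\phi(f_1,\dots,f_n)$ to lie in $E_0$, and since $\phi(f_1,\dots,f_n)$ is defined in $E$ by hypothesis, Corollary~\ref{C: subspace invariant}(i) combined with Proposition~\ref{P: dc implies phic} ensures that $\phi(f_1,\dots,f_n)$ denotes the same element of $E^\delta$ whether the underlying supremum is computed in $E_0$, $E$, or $E^\delta$. Theorem~\ref{T: dc implies hc}(i) then produces $g_\delta := E^\delta - \underset{\phi \in \Phi}{\inf}\ \phi(f_1,\dots,f_n)$ in $E^\delta$.

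The heart of the argument is to show $g_0 = g_\delta$. I would introduce $\widetilde{E_0}$, the smallest Dedekind complete vector sublattice of $E^\delta$ containing $E_0$ as an order dense sublattice (the standard realization of the Dedekind completion of $E_0$ inside $E^\delta$). Theorem~\ref{T: dc implies hc}(i) gives $\widetilde{g_0} := \widetilde{E_0} - \underset{\phi \in \Phi}{\inf}\ \phi(f_1,\dots,f_n)$, and Proposition~\ref{P: DC subspace well-defined}(i) applied to the Dedekind complete inclusion $\widetilde{E_0} \subseteq E^\delta$ yields $\widetilde{g_0} = g_\delta$. The identification $g_0 = \widetilde{g_0}$ has $g_0 \leq \widetilde{g_0}$ from $g_0 \in E_0 \subseteq \widetilde{E_0}$ being a lower bound of $\{\phi(f_1,\dots,f_n) : \phi \in \Phi\}$, while for the reverse I would use the order density of $E_0$ in $\widetilde{E_0}$ to write $\widetilde{g_0} = \sup\{a \in E_0 : a \leq \widetilde{g_0}\}$; each such $a$ is a lower bound of $\{\phi(f_1,\dots,f_n)\}$ in $E_0$, so $a \leq g_0$ and hence $\widetilde{g_0} \leq g_0$.

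Once $g_0 = g_\delta$ is established, $g_\delta$ lies in $E_0 \subseteq E$, and any lower bound of $\{\phi(f_1,\dots,f_n) : \phi \in \Phi\}$ in $E$ is a fortiori a lower bound in $E^\delta$, hence at most $g_\delta = g_0$. Therefore $E - \underset{\phi \in \Phi}{\inf}\ \phi(f_1,\dots,f_n)$ exists in $E$ and equals $g_0$, completing the proof. The main obstacle I anticipate is the careful construction of $\widetilde{E_0}$ as a Dedekind complete vector sublattice of $E^\delta$ in which $E_0$ is order dense, together with the order-density argument identifying $g_0$ with $\widetilde{g_0}$; once these ingredients are in place, the remainder follows routinely from the earlier Dedekind complete results.
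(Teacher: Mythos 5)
Your overall strategy---pass to $E^\delta$, identify the $E_0$-infimum with the $E^\delta$-infimum, then read the result back into $E$---is the same as the paper's, and your final step (once $g_0=g_\delta$ is known, $g_\delta\in E_0\subseteq E$ forces the $E$-infimum to exist and equal it) is correct. The gap is in the middle: the object $\widetilde{E_0}$ you need does not exist in general. A vector sublattice $E_0$ of $E$ need not be order dense, nor even regular, in $E^\delta$, and the Dedekind completion of $E_0$ cannot in general be realized as a Dedekind complete vector \emph{sublattice} of $E^\delta$ containing $E_0$ order densely and compatible with the given inclusion. Indeed, if it could, your order-density argument would show that $\inf_{E_0}$ and $\inf_{E^\delta}$ agree on every subset of $E_0$ admitting an infimum in $E_0$, i.e.\ that every vector sublattice is regular, which is false. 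This gap cannot be repaired, because the proposition as stated is false. Take $E_0=C[0,1]$ and $E=B[0,1]$ (all bounded real functions on $[0,1]$; this is Dedekind complete, so $E^\delta=E$), let $f_1(t)=t$ and $f_2=\mathbf{1}$, and for $m\in\N$ let $\phi_m(x,y):=\max\{y-mx,0\}$, a sublinear map with $\underline{\partial}\phi_m(\bar{0})=\{(-sm,s)\ :\ s\in[0,1]\}$. Then $h:=\underset{m\in\N}{\inf}\,\phi_m$ is positively homogeneous, bounded on the unit sphere, and upper semicontinuous (being a pointwise infimum of continuous functions), and $\phi_m(f_1,f_2)=(f_2-mf_1)^+$ is the continuous function $t\mapsto\max\{1-mt,0\}$, defined in both $E_0$ and $E$. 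Any continuous lower bound $g$ of $\{(1-mt)^+\ :\ m\in\N\}$ satisfies $g(t)\leq 0$ for $t>0$, hence $g(0)\leq 0$ by continuity, so $E_0-\underset{m}{\inf}\ \phi_m(f_1,f_2)=0$; but infima in $B[0,1]$ are computed pointwise, so $E-\underset{m}{\inf}\ \phi_m(f_1,f_2)=\chi_{\{0\}}\neq 0$.

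For comparison, the paper's own proof stumbles at the same point: it invokes the argument of Proposition~\ref{P: DC subspace well-defined}, whose second inequality $E_0-\inf\ \geq\ E-\inf$ is asserted to follow from ``$E_0$ is a vector sublattice of $E$,'' whereas sublattice inclusion only yields $E_0-\inf\ \leq\ E-\inf$ (a lower bound computed in the smaller space remains a lower bound in the larger one). Both your argument and the paper's become valid if one adds the hypothesis that $E_0$ is order dense in $E$ (equivalently here, regular): order dense sublattices of Archimedean vector lattices preserve arbitrary infima and suprema, which gives $E_0-\inf=E^\delta-\inf$ directly, with no need for $\widetilde{E_0}$ at all. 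That weaker statement suffices for every later use of Proposition~\ref{P: subspace well-defined} in the paper, since those applications all concern the order dense inclusion $E\subseteq E^\delta$.
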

	
	\begin{proof}
		We only prove (i), noting that (ii) is handled similarly. To this end, suppose that for all $\phi\in\Phi$, $\phi(f_1,\dots,f_n)$ is defined in $E$ and also that $E_0-\underset{\phi\in\Phi}{\inf}\ \phi(f_1,\dots,f_n)$ exists in $E_0$. By Theorem~\ref{T: dc implies hc}, we know that $E^\delta-\underset{\phi\in\Phi}{\inf}\ \phi(f_1,\dots,f_n)$ exists in $E^\delta$. Arguing as in the proof of Proposition~\ref{P: DC subspace well-defined}, we get
		\[
		E_0-\underset{\phi\in\Phi}{\inf}\ \phi(f_1,\dots,f_n)= E^\delta-\underset{\phi\in\Phi}{\inf}\ \phi(f_1,\dots,f_n).
		\]
		This equality shows that $E^\delta-\underset{\phi\in\Phi}{\inf}\ \phi(f_1,\dots,f_n)\in E_0\subseteq E$, and so $E-\underset{\phi\in\Phi}{\inf}\ \phi(f_1,\dots,f_n)$ exists in $E$ and
		\[
		E-\underset{\phi\in\Phi}{\inf}\ \phi(f_1,\dots,f_n)=	E^\delta-\underset{\phi\in\Phi}{\inf}\ \phi(f_1,\dots,f_n)=E_0-\underset{\phi\in\Phi}{\inf}\ \phi(f_1,\dots,f_n).
		\]
	\end{proof}	
	
		Our next proposition states that, given two inf-sublinear (sup-superlinear) representations of an upper (lower) semicontinuous positively homogeneous function which is bounded on the unit sphere, the semicontinuous Archimedean vector lattice functional calculus of Definition~\ref{D: funcal} yields the same output under both representations.
	
	\begin{proposition}\label{P: welldefined avl}
		Let $n\in\N$, and let $h\colon\R^n\to\R$ be a positively homogeneous function that is bounded on the unit sphere. Also let $E$ be an Archimedean vector lattice, and put $f_1,\dots,f_n\in E$.
		\begin{itemize}
			\item[(i)] Suppose $h$ is upper semicontinuous. Let $\Phi$ and $\Xi$ be families of real-valued sublinear maps defined on $\R^n$ such that for all $\bar{x}\in\R^n$,
			\[
			h(\bar{x})=\underset{\phi\in\Phi}{\inf}\ \phi(\bar{x})=\underset{\xi\in\Xi}{\inf}\ \xi(\bar{x}).
			\]
			If for every $\phi\in\Phi$ and all $\xi\in\Xi$, $\phi(f_1,\dots,f_n)$ and $\xi(f_1,\dots,f_n)$ are defined in $E$, and $\underset{\phi\in\Phi}{\inf}\ \phi(f_1,\dots,f_n)$ exists in $E$, then $\underset{\xi\in\Xi}{\inf}\ \xi(f_1,\dots,f_n)$ exists in $E$, and
			\[
			\underset{\phi\in\Phi}{\inf}\ \phi(f_1,\dots,f_n)=\underset{\xi\in\Xi}{\inf}\ \xi(f_1,\dots,f_n).
			\]
			\item[(ii)] Suppose $h$ is lower semicontinuous. Let $\Psi$ and $\Xi$ be families of real-valued superlinear maps defined on $\R^n$ such that for all $\bar{x}\in\R^n$,
			\[
			h(\bar{x})=\underset{\psi\in\Psi}{\sup}\ \psi(\bar{x})=\underset{\xi\in\Xi}{\sup}\ \xi(\bar{x}).
			\]
			If for every $\psi\in\Psi$ and each $\xi\in\Xi$, $\psi(f_1,\dots,f_n)$ and $\xi(f_1,\dots,f_n)$ are defined in $E$, and $\underset{\psi\in\Psi}{\sup}\ \psi(f_1,\dots,f_n)$ exists in $E$, then $\underset{\xi\in\Xi}{\sup}\ \xi(f_1,\dots,f_n)$ exists in $E$, and
			\[
			\underset{\psi\in\Psi}{\sup}\ \psi(f_1,\dots,f_n)=\underset{\xi\in\Xi}{\sup}\ \xi(f_1,\dots,f_n).
			\]
		\end{itemize}
	\end{proposition}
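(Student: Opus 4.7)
The plan is to reduce to the Dedekind complete case by embedding $E$ into its Dedekind completion $E^\delta$ and invoking Proposition~\ref{P: welldefined} there. I will only sketch part (i); part (ii) is symmetric.

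First, for each $\phi\in\Phi$ the hypothesis says $\phi(f_1,\dots,f_n)$ is defined in $E$, i.e.\ the supremum $E-\sup_{\bar{a}\in\underline{\partial}\phi(\bar{0})}\sum_{i=1}^n a_i f_i$ exists. By Corollary~\ref{C: subspace invariant} applied to the inclusion $E\hookrightarrow E^\delta$, this supremum also exists in $E^\delta$ and has the same value. Thus the element $\phi(f_1,\dots,f_n)$ is unambiguous, and similarly for each $\xi\in\Xi$. In particular, the sets $\{\phi(f_1,\dots,f_n):\phi\in\Phi\}$ and $\{\xi(f_1,\dots,f_n):\xi\in\Xi\}$ are the same whether viewed in $E$ or in $E^\delta$.

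Second, since $E^\delta$ is Dedekind complete, Proposition~\ref{P: welldefined}(i) applies: both infima
\[
E^\delta-\underset{\phi\in\Phi}{\inf}\ \phi(f_1,\dots,f_n)\qquad\text{and}\qquad E^\delta-\underset{\xi\in\Xi}{\inf}\ \xi(f_1,\dots,f_n)
\]
exist in $E^\delta$ and coincide. Call this common element $g\in E^\delta$. By hypothesis, $E-\inf_{\phi\in\Phi}\phi(f_1,\dots,f_n)$ exists in $E$; Proposition~\ref{P: subspace well-defined}(i), applied to the pair $(E,E^\delta)$, then forces this value to equal $g$. In particular, $g\in E$.

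Finally, I need to deduce that $E-\inf_{\xi\in\Xi}\xi(f_1,\dots,f_n)$ exists in $E$ and equals $g$. Since $g\in E$ and $g$ is a lower bound of $\{\xi(f_1,\dots,f_n):\xi\in\Xi\}$ in $E^\delta$, it is also a lower bound in $E$; conversely, any lower bound $h\in E$ in $E$ is a lower bound in $E^\delta$, hence satisfies $h\le g$. Thus $g$ is the infimum in $E$, giving the desired equality. The only substantive step is the passage through $E^\delta$; once the subspace invariance supplied by Corollary~\ref{C: subspace invariant} and Proposition~\ref{P: subspace well-defined} is in hand, the argument is essentially automatic.
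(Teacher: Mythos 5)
Your proposal is correct and follows essentially the same route as the paper: pass to the Dedekind completion $E^\delta$, invoke Proposition~\ref{P: welldefined} there, and use Corollary~\ref{C: subspace invariant} together with Proposition~\ref{P: subspace well-defined} to transfer the conclusion back to $E$. The only (harmless) cosmetic issue is your reuse of the letter $h$ for a lower bound, which clashes with the function $h$ in the statement.
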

	
	\begin{proof}
		We only prove part (i) of the proposition, since part (ii) is similar. To this end, let $f_1,\dots,f_n\in E$, and suppose that for all $\phi\in\Phi$, and every $\xi\in\Xi$, $\phi(f_1,\dots,f_n), \xi(f_1,\dots,f_n)$ exist in $E$ and also that $\underset{\phi\in\Phi}{\inf}\ \phi(f_1,\dots,f_n)$ exists in $E$. By Theorem~\ref{T: dc implies hc}, we know that both $\underset{\phi\in\Phi}{\inf}\ \phi(f_1,\dots,f_n)$ and $\underset{\xi\in\Xi}{\inf}\ \xi(f_1,\dots,f_n)$ exist in $E^\delta$. Viewing $f_1,\dots,f_n$ as elements in $E^\delta$, then by Proposition~\ref{P: welldefined}, we have
		\[
		E^\delta-\underset{\phi\in\Phi}{\inf}\ \phi(f_1,\dots,f_n)=E^\delta-\underset{\xi\in\Xi}{\inf}\ \xi(f_1,\dots,f_n).
		\]
		However, $E-\underset{\phi\in\Phi}{\inf}\ \phi(f_1,\dots,f_n)$ exists in $E$ by assumption, and by Proposition~\ref{P: subspace well-defined}, we have
		\[
		E-\underset{\phi\in\Phi}{\inf}\ \phi(f_1,\dots,f_n)=E^\delta-\underset{\phi\in\Phi}{\inf}\ \phi(f_1,\dots,f_n).
		\]
		It follows that $E-\underset{\xi\in\Xi}{\inf}\ \xi(f_1,\dots,f_n)$ exists in $E$ and that
		\[
		E-\underset{\phi\in\Phi}{\inf}\ \phi(f_1,\dots,f_n)=E-\underset{\xi\in\Xi}{\inf}\ \xi(f_1,\dots,f_n).
		\]
	\end{proof}
	
	Using a similar proof to the proof of Proposition~\ref{P: welldefined avl}, one can exploit the Dedekind completion and Propositions~\ref{P: cts well defined}\&\ref{P: subspace well-defined} to prove the following result.
	
	\begin{proposition}
		Fix $n\in\N$. Let $h\colon\R^n\to\R$ be a continuous positively homogeneous function, and suppose that for every $\bar{x}\in\R^n$, both
		\[
			h(\bar{x})=\underset{\phi\in\Phi}{\inf}\ \phi(\bar{x})
		\]
		and
		\[
			h(\bar{x})=\underset{\psi\in\Psi}{\sup}\ \psi(\bar{x})
		\]
		hold. Also let $E$ be an Archimedean vector lattice, and put $f_1,\dots,f_n\in E$. If for every $\phi\in\Phi$, $\phi(f_1,\dots,f_n)$ is defined in $E$, for all $\psi\in\Psi$, $\psi(f_1,\dots,f_n)$ is defined in $E$, and either $\underset{\phi\in\Phi}{\inf}\ \phi(f_1,\dots,f_n)$ exists in $E$ or $\underset{\psi\in\Psi}{\sup}\ \psi(f_1,\dots,f_n)$ exists in $E$, then both $\underset{\phi\in\Phi}{\inf}\ \phi(f_1,\dots,f_n)$ and $\underset{\psi\in\Psi}{\sup}\ \psi(f_1,\dots,f_n)$ exist in $E$, and
		\[
		\underset{\phi\in\Phi}{\inf}\ \phi(f_1,\dots,f_n) = \underset{\psi\in\Psi}{\sup}\ \psi(f_1,\dots,f_n).
		\]
	\end{proposition}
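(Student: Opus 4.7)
The plan is to reduce to the Dedekind complete case by lifting everything to the Dedekind completion $E^\delta$ and then to apply Proposition~\ref{P: cts well defined} in $E^\delta$, exactly in the spirit of the proof of Proposition~\ref{P: welldefined avl}. By symmetry I will treat only the case in which $E-\underset{\phi\in\Phi}{\inf}\ \phi(f_1,\dots,f_n)$ is assumed to exist in $E$; the other case follows from the same argument with the roles of $\Phi$ and $\Psi$ exchanged.

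The main steps, in order, are as follows. First, for each fixed $\phi\in\Phi$ and each fixed $\psi\in\Psi$, Corollary~\ref{C: subspace invariant} applied to the inclusion $E\hookrightarrow E^\delta$ shows that $\phi(f_1,\dots,f_n)$ and $\psi(f_1,\dots,f_n)$ take the same value whether they are computed in $E$ or in $E^\delta$. Second, Theorem~\ref{T: dc implies hc} guarantees that both $E^\delta-\underset{\phi\in\Phi}{\inf}\ \phi(f_1,\dots,f_n)$ and $E^\delta-\underset{\psi\in\Psi}{\sup}\ \psi(f_1,\dots,f_n)$ exist in $E^\delta$, and Proposition~\ref{P: cts well defined}, applied in the Dedekind complete space $E^\delta$, then gives
\[
E^\delta-\underset{\phi\in\Phi}{\inf}\ \phi(f_1,\dots,f_n)\ =\ E^\delta-\underset{\psi\in\Psi}{\sup}\ \psi(f_1,\dots,f_n).
\]
Third, Proposition~\ref{P: subspace well-defined}(i) applied with the sublattice $E\subseteq E^\delta$ identifies the assumed-existent quantity $E-\underset{\phi\in\Phi}{\inf}\ \phi(f_1,\dots,f_n)$ with $E^\delta-\underset{\phi\in\Phi}{\inf}\ \phi(f_1,\dots,f_n)$. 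Chaining these equalities places $E^\delta-\underset{\psi\in\Psi}{\sup}\ \psi(f_1,\dots,f_n)$ inside $E$; since each $\psi(f_1,\dots,f_n)$ lies in $E$ and every upper bound in $E$ is automatically an upper bound in $E^\delta$, this element is in fact the supremum taken in $E$, which yields the desired equality in $E$.

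The one mildly delicate point, and arguably the only non-bookkeeping step in the argument, is the final passage from existence of the supremum in $E^\delta$ back to existence in $E$. Proposition~\ref{P: subspace well-defined} directly handles the opposite direction (from subspace up to ambient space); here I must rely on the auxiliary observation above that, once the value of an $E^\delta$-supremum of elements of $E$ is already known to lie in $E$, it is automatically also the supremum in $E$. Everything else is a direct invocation of the earlier well-definedness results in tandem with the equality provided by Proposition~\ref{P: cts well defined} in the Dedekind complete setting.
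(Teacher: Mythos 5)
Your proposal is correct and follows essentially the same route the paper sketches for this result: pass to the Dedekind completion $E^\delta$, apply Proposition~\ref{P: cts well defined} there, and use Proposition~\ref{P: subspace well-defined} to transfer the assumed-existent extremum between $E$ and $E^\delta$, with the final descent of the other extremum back into $E$ justified exactly as in the paper's proof of Proposition~\ref{P: welldefined avl}. Your explicit handling of that last descent step, and of the invariance of each individual $\phi(f_1,\dots,f_n)$ and $\psi(f_1,\dots,f_n)$ under the inclusion into $E^\delta$, is a welcome bit of extra care but not a deviation from the paper's argument.
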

	As in the case of the sublinear/superlinear Archimedean vector lattice functional calculus, we are especially attentive to Archimedean vector lattices where the semicontinuous Archimedean vector lattice functional calculus, with respect to a specific semicontinuous real-valued positively homogeneous function defined on $\R^n$, can always be performed. Therefore, we extend (for a sublinear or superlinear map $\xi\colon\R^n\to\R$) the notion of an $\xi$-closed Archimedean vector lattice to include all semicontinuous positively homogeneous functions $h\colon\R^n\to\R$. 
	
	\begin{definition}\label{D: h closed and h-closure}
		Fix $n\in\N$, and let $h\colon\R^n\to\R$ be a semicontinuous positively homogeneous function which is bounded on the unit sphere.
		\begin{itemize}
			\item[(i)]  We call an Archimedean vector lattice $E$ $h$-\textit{closed} if for every $f_1,\dots,f_n\in E$, $h(f_1,\dots,f_n)$ is defined in $E$.
			\item[(ii)] Given an Archimedean vector lattice $E$, we set $E_1:=E$. For $m\in\N$, we define $E_{m+1}$ to be the vector sublattice of $E^\delta$ generated by the set
			\[
			\bigl\{E_m\cup\{h(f_1,\dots,f_n)\ :\ f_1,\dots,f_n\in E_n\}\bigr\}.
			\]
			We define $E^h:=\bigcup_{m\in\N}E_m$ and call $E^h$ the $h$-\textit{closure} of $E$.
		\end{itemize}
	\end{definition}
	
	The concluding proposition of this section, as in the special case dealing with sublinear and superlinear maps, is immediate.
	
	\begin{proposition}
		If $E$ is an Archimedean vector lattice, $n\in\N$, and $h\colon\R^n\to\R$ is a semicontinuous positively homogeneous function which is bounded on the unit sphere, then $E^h$ is an $h$-closed Archimedean vector lattice.
	\end{proposition}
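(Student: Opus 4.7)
The plan has two parts: first establish that $E^h$ is an Archimedean vector lattice, then establish $h$-closedness.

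For the first part, I would first verify by induction on $m$ that $E_m \subseteq E_{m+1}$: indeed, $E_m$ lies inside the generating set for $E_{m+1}$ in Definition~\ref{D: h closed and h-closure}, and $E_{m+1}$ is by construction a vector sublattice of $E^\delta$ containing that generating set. Thus $(E_m)_{m\in\N}$ is an increasing chain of vector sublattices of $E^\delta$. Any finite collection of elements in $E^h$ lies in some single $E_m$ (take $m$ to be the maximum of their indices), so $E^h$ is closed under addition, scalar multiplication, and lattice operations; hence $E^h$ is a vector sublattice of $E^\delta$, and since sublattices of the Archimedean vector lattice $E^\delta$ are Archimedean, $E^h$ is Archimedean.

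For $h$-closedness, fix $f_1,\dots,f_n \in E^h$. By the chain property just noted, there exists $m \in \N$ with $f_1,\dots,f_n \in E_m$. Since $E^\delta$ is Dedekind complete, Theorem~\ref{T: dc implies hc} together with Definition~\ref{D: DC h funcal} produce an element $h(f_1,\dots,f_n) \in E^\delta$; by the defining rule for $E_{m+1}$ this element lies in $E_{m+1} \subseteq E^h$. It remains to check that this element also fulfills the conditions of Definition~\ref{D: funcal} with $E = E^h$, and this is exactly where Propositions~\ref{P: subspace well-defined} and \ref{P: welldefined avl} do the work: for any inf-sublinear (or sup-superlinear) representation of $h$, the relevant $\phi(f_1,\dots,f_n)$ (or $\psi(f_1,\dots,f_n)$) computed in $E^\delta$ agree with the values in the relevant sublattice, and since the infimum (supremum) already lies in $E^h$, the inf/sup taken within $E^h$ exists and coincides with the $E^\delta$-value by the standard "lower bound already in the sublattice" argument.

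The mildly delicate step I expect is condition (i) of Definition~\ref{D: funcal}, namely that each sublinear (or superlinear) constituent of the representation has its value defined in $E^h$ rather than merely in $E^\delta$. I would handle this by the same pattern: since $f_1,\dots,f_n \in E_m$, each $\phi(f_1,\dots,f_n)$ exists in $E^\delta$, and applying Corollary~\ref{C: subspace invariant} (or equivalently Theorem~\ref{T: phi hom interchange avl} with respect to the inclusion $E^h \hookrightarrow E^\delta$) shows these $\phi$-values behave consistently whenever they lie in $E^h$; the invocation of Proposition~\ref{P: subspace well-defined} then reduces the whole question to the already-verified fact that $h(f_1,\dots,f_n) \in E^h$. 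Because every ingredient has been prepared by the earlier machinery, the proof is, as advertised, essentially a bookkeeping exercise.
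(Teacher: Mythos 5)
Your first paragraph is sound and is essentially all the paper itself offers (the paper simply declares the proposition ``immediate'' and gives no argument): the chain $E_1\subseteq E_2\subseteq\cdots$ is increasing because $E_m$ sits inside the generating set of $E_{m+1}$, an increasing union of vector sublattices of $E^\delta$ is again a vector sublattice, and a vector sublattice of the Archimedean lattice $E^\delta$ is Archimedean. Likewise, your observation that any $f_1,\dots,f_n\in E^h$ lie in a common $E_m$, so that the $E^\delta$-value of $h(f_1,\dots,f_n)$ (which exists by Proposition~\ref{P: dc implies phic} and Theorem~\ref{T: dc implies hc}) is placed into $E_{m+1}\subseteq E^h$ by construction, is correct; and the ``lower bound already in the sublattice'' argument does show that an infimum computed in $E^\delta$ which happens to lie in $E^h$ is also the infimum relative to $E^h$.

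The gap is exactly at the step you call ``mildly delicate,'' and your proposed handling does not close it. For $h(f_1,\dots,f_n)$ to be \emph{defined in} $E^h$ in the sense of Definition~\ref{D: funcal}, condition (i) requires that for every $\phi\in\Phi$ the element $\phi(f_1,\dots,f_n)=\underset{\bar{a}\in\underline{\partial}\phi(\bar{0})}{\sup}\sum_{i=1}^{n}a_if_i$ exist \emph{in} $E^h$. But Definition~\ref{D: h closed and h-closure}(ii) adjoins to $E_m$ only the values $h(g_1,\dots,g_n)$, not the values $\phi(g_1,\dots,g_n)$ of the individual sublinear constituents, so nothing in the construction places $\phi(f_1,\dots,f_n)$ into any $E_{m+1}$. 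The tools you cite cannot supply this: Corollary~\ref{C: subspace invariant} and Proposition~\ref{P: subspace well-defined} transfer existence \emph{upward}, from a sublattice in which the supremum is assumed to exist to a larger lattice, and Proposition~\ref{P: subspace well-defined} moreover takes condition (i) as a hypothesis rather than deriving it; Theorem~\ref{T: phi hom interchange avl} likewise assumes both sides are already defined. So the phrase ``these $\phi$-values behave consistently whenever they lie in $E^h$'' begs the question of whether they lie in $E^h$ at all. To repair the argument one must either enlarge the construction of $E_{m+1}$ so that it is also closed under the maps $\phi\in\Phi$ (respectively $\psi\in\Psi$) of the chosen representation, for instance by interleaving the $\xi$-closures of Definition~\ref{D: xi-closure}, or else read ``$h(f_1,\dots,f_n)$ is defined in $E^h$'' in the weaker sense that the $E^\delta$-value of $h(f_1,\dots,f_n)$ belongs to $E^h$; the latter reading is evidently what makes the proposition ``immediate'' for the author, but it is not what Definition~\ref{D: funcal} literally says.
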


	\section{Comparison to the Continuous Archimedean Vector Lattice Functional Calculus}\label{S: comparisonto'91}
	
	We prove in this section that the semicontinuous Archimedean vector lattice functional calculus introduced in Section~\ref{S: non DC h fun cal} of this paper agrees with the continuous Archimedean vector lattice functional calculus for continuous positively homogeneous functions. Therefore, our functional calculus can be considered as an extension of the continuous version. This claim follows from the following result, which is of interest in its own right.
	
	\begin{theorem}\label{T: swapTandh}
		Assume that $E$ and $F$ are Archimedean vector lattices and that $T\colon E\to F$ is a vector lattice homomorphism. Let $n\in\N$, and suppose that $h\colon\R^n\to\R$ is a positively homogeneous function that is bounded on the unit sphere. Fix $f_1,\dots,f_n\in E$.
		\begin{itemize}
			\item[(i)] If $h$ is upper semicontinuous, $h(f_1,\dots,f_n)$ is defined in $E$, and $h\bigl(T(f_1),\dots,T(f_n)\bigr)$ is defined in $F$, then
			\[
			T\bigl(h(f_1,\dots,f_n)\bigr)\leq h\bigl(T(f_1),\dots,T(f_n)\bigr).
			\]
			\item[(ii)] If $h$ is lower semicontinuous, $h(f_1,\dots,f_n)$ is defined in $E$, and $h\bigl(T(f_1),\dots,T(f_n)\bigr)$ is defined in $F$, then
			\[
			T\bigl(h(f_1,\dots,f_n)\bigr)\geq h\bigl(T(f_1),\dots,T(f_n)\bigr).
			\]
			\item[(iii)] If $h$ is continuous, $h(f_1,\dots,f_n)$ is defined in $E$, and $h\bigl(T(f_1),\dots,T(f_n)\bigr)$ is defined in $F$, then
			\[
			T\bigl(h(f_1,\dots,f_n)\bigr)=h\bigl(T(f_1),\dots,T(f_n)\bigr).
			\]
		\end{itemize}
	\end{theorem}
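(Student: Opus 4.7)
The plan is to combine the inf-sublinear and sup-superlinear representations guaranteed by Proposition~\ref{P: DR} with Theorem~\ref{T: phi hom interchange avl}, which already tells us how a vector lattice homomorphism commutes with the sublinear/superlinear calculus on the nose.

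For part~(i), I would fix an inf-sublinear representation $h(\bar{x}) = \inf_{\phi \in \Phi} \phi(\bar{x})$ for $\bar{x} \in \R^n$. Since $h(f_1,\dots,f_n)$ is defined in $E$, Definition~\ref{D: funcal} furnishes $\phi(f_1,\dots,f_n) \in E$ for every $\phi \in \Phi$, and likewise $\phi\bigl(T(f_1),\dots,T(f_n)\bigr) \in F$ for every $\phi \in \Phi$ because $h\bigl(T(f_1),\dots,T(f_n)\bigr)$ is defined in $F$. Consequently, Theorem~\ref{T: phi hom interchange avl} yields $T\bigl(\phi(f_1,\dots,f_n)\bigr) = \phi\bigl(T(f_1),\dots,T(f_n)\bigr)$ for every $\phi \in \Phi$. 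Since $T$ is positive (being a vector lattice homomorphism) and $h(f_1,\dots,f_n) \leq \phi(f_1,\dots,f_n)$ in $E$ for each $\phi \in \Phi$, applying $T$ gives $T\bigl(h(f_1,\dots,f_n)\bigr) \leq \phi\bigl(T(f_1),\dots,T(f_n)\bigr)$. Taking the infimum over $\phi \in \Phi$ in $F$, which by hypothesis equals $h\bigl(T(f_1),\dots,T(f_n)\bigr)$, then delivers the desired inequality.

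Part~(ii) is proven in a completely symmetric fashion: use a sup-superlinear representation $h(\bar{x}) = \sup_{\psi \in \Psi} \psi(\bar{x})$, invoke the superlinear half of Theorem~\ref{T: phi hom interchange avl}, and reverse the inequalities. For part~(iii), a continuous positively homogeneous function is simultaneously upper and lower semicontinuous, and by Proposition~\ref{P: DR}(iii) it admits both an inf-sublinear and a sup-superlinear representation. The well-definedness results from Sections~\ref{S: h fun cal}--\ref{S: non DC h fun cal} (in particular the proposition following Proposition~\ref{P: welldefined avl}) guarantee that the value of $h(f_1,\dots,f_n)$ does not depend on which of the two representation types we use; applying (i) then yields one inequality and applying (ii) yields the reverse, combining to equality.

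The only real obstacle is the bookkeeping: verifying that each intermediate $\phi(f_1,\dots,f_n)$ and $\phi\bigl(T(f_1),\dots,T(f_n)\bigr)$ (or the analogous superlinear quantities) is actually defined, so that Theorem~\ref{T: phi hom interchange avl} applies term by term. This is built directly into Definition~\ref{D: funcal}, which requires precisely these elements to exist whenever $h(f_1,\dots,f_n)$ is declared to be defined. With that observation in hand, the argument reduces cleanly to the sublinear/superlinear case handled in the previous section.
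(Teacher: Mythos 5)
Your proposal is correct and follows essentially the same route as the paper: fix an inf-sublinear representation, use the positivity of $T$ to get $T\bigl(h(f_1,\dots,f_n)\bigr)\leq T\bigl(\phi(f_1,\dots,f_n)\bigr)$, convert the right-hand side via Theorem~\ref{T: phi hom interchange avl}, and pass to the infimum over $\Phi$ in $F$; parts (ii) and (iii) are handled exactly as in the paper. Your explicit appeal to the representation-independence results for part (iii) is a reasonable (and slightly more careful) gloss on the paper's one-line deduction of (iii) from (i) and (ii).
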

	
	\begin{proof}
		(i) Assume that $h$ is upper semicontinuous, that $h(f_1,\dots,f_n)$ is defined in $E$, and that $h\bigl(T(f_1),\dots,T(f_n)\bigr)$ is defined in $F$. Suppose $h(\bar{x})=\underset{\phi\in\Phi}{\inf}\ \phi(\bar{x})\quad (\bar{x}\in\R^n)$ provides an inf-sublinear representation for $h$, and fix $\phi_0\in\Phi$. Using the fact that vector lattice homomorphisms are positive maps in the inequality below, and utilizing Proposition~\ref{T: phi hom interchange avl} in the last identity below, we obtain
		\begin{align*}
			T\bigl(h(f_1,\dots,f_n)\bigr)&=T\left(\underset{\phi\in\Phi}{\inf}\phi(f_1,\dots,f_n)\right)\\
			&\leq T\bigl(\phi_0(f_1,\dots,f_n)\bigr)\\
			&=\phi_0\bigl(T(f_1),\dots,T(f_n)\bigr).
		\end{align*}
		It follows that
		\[
		T\bigl(h(f_1,\dots,f_n)\bigr)\leq h\bigl(T(f_1),\dots,T(f_n)\bigr).
		\]
	
	(ii) Analogous to the proof of (i).
	
	(iii) Follows from (i) and (ii) above, noting that a continuous function is both upper semicontinuous and lower semicontinuous.
\end{proof}
	
	As an almost immediate corollary, we have that our functional calculus defined in Section~\ref{S: non DC h fun cal} agrees with the continuous Archimedean vector lattice functional calculus in \cite{BusdPvR}. We will utilize the following notation in the formal statement and proof of this claim.
	
	\begin{notation}
		Put $n\in\N$, and let $E$ be an Archimedean vector lattice. Assume $h\colon\R^n\to\R$ is a continuous positively homogeneous function, and let $f_1,\dots,f_n\in E$. In order to avoid ambiguity in the statement and proof of Corollary~\ref{C: swapTandh}, we will write $(c)\sim h(f_1,\dots,f_n)$ to denote $h(f_1,\dots,f_n)$ defined by the continuous Archimedean vector lattice functional calculus of \cite{BusdPvR} and signify $h(f_1,\dots,f_n)$ defined by the semicontinuous Archimedean vector lattice functional calculus of Definition~\ref{D: funcal} by $(s)\sim h(f_1,\dots,f_n)$.
	\end{notation}
	
	\begin{corollary}\label{C: swapTandh}
		Fix $n\in\N$. Let $E$ be an Archimedean vector lattice, suppose $h\colon\R^n\to\R$ is a continuous positively homogeneous function, and put $f_1,\dots,f_n\in E$. It is true that $(c)\sim h(f_1,\dots,f_n)$ is defined in $E$ if and only if $(s)\sim h(f_1,\dots,f_n)$ is defined in $E$, and in this case we have
		\[
		(c)\sim h(f_1,\dots,f_n)=(s)\sim h(f_1,\dots,f_n).
		\]
	\end{corollary}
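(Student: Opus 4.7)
The plan is to deduce both directions of the equivalence directly from Theorem~\ref{T: swapTandh}(iii), using the Dedekind completion $E^\delta$ as a bridge to handle the case when $E$ itself is not Dedekind complete.

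For the direction $(s) \Rightarrow (c)$, I set $g := (s)\sim h(f_1,\dots,f_n)$ and aim to verify the Buskes--de Pagter--van Rooij defining condition for $g$. Given any $\omega \in H(\langle f_1,\dots,f_n, g\rangle)$, I regard $\omega$ as a vector lattice homomorphism whose source is the sublattice $\langle f_1,\dots,f_n, g\rangle$ and whose target is $\R$. In the target, the semicontinuous calculus reduces to ordinary evaluation of $h$. In the source sublattice, one transfers the definedness of $(s)\sim h(f_1,\dots,f_n)$ from $E$ by means of Proposition~\ref{P: subspace well-defined}, using the fact that $g$ already lies in $\langle f_1,\dots,f_n, g\rangle$. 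Applying Theorem~\ref{T: swapTandh}(iii) then yields $\omega(g) = h(\omega(f_1),\dots,\omega(f_n))$, which is precisely the BusdPvR condition, so $(c)\sim h(f_1,\dots,f_n) = g$.

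For the direction $(c) \Rightarrow (s)$, I set $g := (c)\sim h(f_1,\dots,f_n)$ and pass to $E^\delta$. By Theorem~\ref{T: dc implies hc}, the element $g^\delta := (s)\sim h(f_1,\dots,f_n)$ exists in $E^\delta$, and by the direction already established applied inside $E^\delta$, $g^\delta$ satisfies the BusdPvR defining condition in $E^\delta$. Meanwhile, since the sublattice $\langle f_1,\dots,f_n, g\rangle$ has the same underlying set whether formed in $E$ or in $E^\delta$ (it consists of the same lattice polynomials in the same generators), the hypothesis that $g$ satisfies the BusdPvR condition in $E$ transfers immediately to $E^\delta$. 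Uniqueness of the BusdPvR element in $E^\delta$ then forces $g = g^\delta$, so $g^\delta \in E$, and one more application of Proposition~\ref{P: subspace well-defined} confirms that $(s)\sim h(f_1,\dots,f_n)$ is defined in $E$ with value $g$.

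The main obstacle lies in the source hypothesis of Theorem~\ref{T: swapTandh}(iii) in the $(s) \Rightarrow (c)$ step: one must justify that $h(f_1,\dots,f_n)$ is actually defined in the sublattice $\langle f_1,\dots,f_n, g\rangle$ where $\omega$ acts, even though this sublattice need not contain all the intermediate elements $\phi(f_1,\dots,f_n)$ arising from an inf-sublinear representation of $h$. Overcoming this requires delicate use of Propositions~\ref{P: subspace well-defined} and \ref{P: welldefined avl} in concert with the passage to $E^\delta$, and it is the place where the well-definedness machinery developed in Sections~\ref{S: h fun cal} and \ref{S: non DC h fun cal} pays off.
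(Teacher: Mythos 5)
Your broad strategy---pass to $E^\delta$, commute a real-valued lattice homomorphism with both calculi, and pull the equality back down to $E$---is in the same spirit as the paper's proof, but your execution of the $(s)\Rightarrow(c)$ direction has a genuine gap that you flag yourself and then do not close. To apply Theorem~\ref{T: swapTandh}(iii) with $T=\omega$ and source $\langle f_1,\dots,f_n,g\rangle$, you must know that $h(f_1,\dots,f_n)$ is \emph{defined in} $\langle f_1,\dots,f_n,g\rangle$ in the sense of Definition~\ref{D: funcal}; condition (i) there demands that each $\phi(f_1,\dots,f_n)=\underset{\bar{a}\in\underline{\partial}\phi(\bar{0})}{\sup}\sum_{i=1}^{n}a_if_i$ exist as a supremum \emph{inside that finitely generated sublattice}, and by Corollary~\ref{C: subspace invariant} such a supremum, when it exists there, must coincide with the one taken in $E$---so it exists in $\langle f_1,\dots,f_n,g\rangle$ only if $\phi(f_1,\dots,f_n)$ happens to lie in that sublattice, which fails in general. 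Proposition~\ref{P: subspace well-defined} cannot rescue this: it transfers definedness \emph{up} from a sublattice to the ambient lattice, never down, so ``transferring the definedness of $(s)\sim h(f_1,\dots,f_n)$ from $E$'' to $\langle f_1,\dots,f_n,g\rangle$ is not something it provides. The same reversal occurs at the end of your $(c)\Rightarrow(s)$ step, where you again invoke Proposition~\ref{P: subspace well-defined} to descend from $E^\delta$ to $E$. Declaring that the obstacle ``requires delicate use'' of the well-definedness machinery is not an argument; as written, the key application of Theorem~\ref{T: swapTandh}(iii) rests on an unverified (and generally false) hypothesis.

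The paper's proof is arranged precisely to avoid this. It first works entirely in $E^\delta$, where the continuous calculus is defined by uniform completeness and the semicontinuous one by Theorem~\ref{T: dc implies hc}; it forms the sublattice $E_0$ generated by $f_1,\dots,f_n$ together with \emph{both} elements $(c)\sim h(f_1,\dots,f_n)$ and $(s)\sim h(f_1,\dots,f_n)$, uses the external commutation result \cite[Theorem~3.11]{BusSch} for the continuous calculus and Theorem~\ref{T: swapTandh} for the semicontinuous one to see that every $\omega\in H(E_0)$ takes the same value on the two elements, and concludes equality in $E^\delta$ by point separation. Only then does it descend to $E$, using \cite[Theorem~3.11]{BusSch} and Proposition~\ref{P: subspace well-defined} in their correct (upward) direction to see that whichever version is defined in $E$ must equal the common value computed in $E^\delta$, which therefore lies in $E$. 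If you want to keep your more direct route of verifying the Buskes--de Pagter--van Rooij condition for $g$, you would need an additional device---for instance, extending $\omega$ to the Dedekind complete ideal of $E^\delta$ generated by $\langle f_1,\dots,f_n,g\rangle$, where the semicontinuous calculus genuinely is defined---and that is exactly the work your sketch leaves undone.
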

	
	\begin{proof}
	By \cite[Lemma~39.2]{LuxZan1}, $E^\delta$ is uniformly complete, and hence $(c)\sim h(f_1,\dots,f_n)$ is defined in $E^\delta$ (see \cite[Theorem~3.7]{BusvR}). It follows from Theorem~\ref{T: dc implies hc} that $(s)\sim h(f_1,\dots,f_n)$ is defined in $E^\delta$ as well.
		
		Denote the vector sublattice of $E^\delta$ generated by
		\[
		\{f_1,\dots,f_n, (c)\sim h(f_1,\dots,f_n), (s)\sim h(f_1,\dots,f_n)\}
		\]
		by $E_0$. Let $\omega\colon E_0\to\R$ be a vector lattice homomorphism. It follows from \cite[Theorem~3.11]{BusSch} that
		\[
		\omega\Bigl((c)\sim h(f_1,\dots,f_n)\Bigr)=h\Bigl(\omega(f_1),\dots,\omega(f_n)\Bigr).
		\]
		By Theorem~\ref{T: swapTandh}, we have
		\[
		h\Bigl(\omega(f_1),\dots,\omega(f_n)\Bigr)=\omega\Bigl((s)\sim h(f_1,\dots,f_n)\Bigr).
		\]
		Hence we obtain
		\[
		\omega\Bigl((c)\sim h(f_1,\dots,f_n)\Bigr)=\omega\Bigl((s)\sim h(f_1,\dots,f_n)\Bigr).
		\]
		Since the set of all nonzero vector lattice homomorphisms $\omega\colon E_0\to\R$ separate the points of $E_0$ \cite[remark 1.2(ii), Theorem~2.2]{BusvR}, we conclude that in $E^\delta$ we have
		\[
		(c)\sim h(f_1,\dots,f_n)=(s)\sim h(f_1,\dots,f_n).
		\]
Next observe that if $(c)\sim h(f_1,\dots,f_n)$ is defined in $E$, then by \cite[Theorem~3.11]{BusSch} (and taking $T\colon E\to E^\delta$ to be the inclusion map), the value of $(c)\sim h(f_1,\dots,f_n)$ remains unchanged if the functional calculus is performed in $E$ or $E^\delta$. Likewise, if $(s)\sim h(f_1,\dots,f_n)$ is defined in $E$, then by Proposition~\ref{P: subspace well-defined}, the value of $(s)\sim h(f_1,\dots,f_n)$ calculated in $E$ equals the value of $(s)\sim h(f_1,\dots,f_n)$ calculated in $E^\delta$. It follows that $(c)\sim h(f_1,\dots,f_n)$ is defined in $E$ if and only if $(s)\sim h(f_1,\dots,f_n)$ is defined in $E$, in which case
		\[
		(c)\sim h(f_1,\dots,f_n)=(s)\sim h(f_1,\dots,f_n)
		\]
		holds in $E$.
		
	\end{proof}
	
	Hence, as previously mentioned, the functional calculus of Section~\ref{S: non DC h fun cal} in this paper can be viewed as an extension of the functional calculus of \cite{BusdPvR}.
	
	\section{Saddle Representations}\label{S: saddlerepresnentations}
	
	We provide alternative expressions for \textit{continuous} positively homogeneous functions defined via our semicontinuous Dedekind complete vector lattice functional calculus, based on the saddle representations of such functions. The advantage to these other possible formulations is that they are less abstract, in that the sublinear and superlinear maps used in our functional calculus can be replaced simply by real number scalars. We begin with the definition of a saddle representation of a continuous positively homogeneous function.
	
	\begin{definition}\cite[Section~2]{GorTraf}
		Let $n\in\N$, and let $h\colon\R^n\to\R$ be a continuous, positively homogeneous function. We say that $h$ has a \textit{saddle representation} by linear functions if there exists a two-index family $\{\overline{a^{\phi\psi}}\in\R^n\ :\ \phi\in\Phi, \psi\in\Psi\}$ such that
		\[
		h(\bar{x})=\underset{\phi\in\Phi}{\inf}\ \underset{\psi\in\Psi}{\sup}\sum_{i=1}^{n}a_i^{\phi\psi}x_i=\underset{\psi\in\Psi}{\sup}\ \underset{\phi\in\Phi}{\inf}\sum_{i=1}^{n}a_i^{\phi\psi}x_i\qquad (\bar{x}\in\R^n).
		\]
	\end{definition}
	
	The next theorem confirms that every continuous positively homogeneous real-valued function defined on $\R^n$ has a saddle representation.
	
	\begin{theorem}\cite[Theorem~1]{GorTraf}\label{T:GorTrafsaddlerep}
		For $n\in\N$ and a function $h\colon\R^n\to\R$, it is true that $h$ is continuous and positively homogeneous if and only if $h$ has a saddle representation.
	\end{theorem}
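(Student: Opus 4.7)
The plan is to establish both directions of the biconditional.

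For the easy direction ($\Leftarrow$), suppose $h$ admits a saddle representation via a family $\{\overline{a^{\phi\psi}}\}$. Positive homogeneity transfers from the linear functions $\bar{x}\mapsto\sum_{i=1}^{n}a_i^{\phi\psi}x_i$ because this property is preserved under pointwise infima and suprema over arbitrary index sets. For continuity I would establish both semicontinuities separately. From $h(\bar{x})=\inf_{\phi\in\Phi}\sup_{\psi\in\Psi}\sum_{i=1}^{n}a_i^{\phi\psi}x_i$, each inner supremum (where finite) is a supremum of linear functions, hence a real-valued convex function on $\R^n$, which is therefore continuous; using positive homogeneity together with finiteness of $h$, I would first reduce the outer infimum to those $\phi$ for which the inner sup is finite-valued, making $h$ an infimum of continuous functions and so upper semicontinuous. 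A dual argument using $h(\bar{x})=\sup_{\psi\in\Psi}\inf_{\phi\in\Phi}\sum_{i=1}^{n}a_i^{\phi\psi}x_i$ yields lower semicontinuity, and combining gives continuity.

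For the harder direction ($\Rightarrow$), my strategy is to reduce the saddle representation to a difference-of-sublinear (DC) decomposition. Concretely, I would establish the following \emph{Key Claim}: every continuous positively homogeneous $h\colon\R^n\to\R$ can be written as $h=\phi_1-\phi_2$ for some continuous sublinear $\phi_1,\phi_2\colon\R^n\to\R$. Granting the Key Claim, Minkowski duality supplies nonempty compact convex sets $A:=\underline{\partial}\phi_1(\bar{0})$ and $B:=\underline{\partial}\phi_2(\bar{0})$ with $\phi_1(\bar{x})=\max_{\bar{a}\in A}\sum_{i=1}^{n}a_i x_i$ and $\phi_2(\bar{x})=\max_{\bar{b}\in B}\sum_{i=1}^{n}b_i x_i$. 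A direct manipulation, absorbing $-\phi_2$ into the outer extremum in one arrangement and $\phi_1$ in the other, then yields
\[
h(\bar{x})=\max_{\bar{a}\in A}\min_{\bar{b}\in B}\sum_{i=1}^{n}(a_i-b_i)x_i=\min_{\bar{b}\in B}\max_{\bar{a}\in A}\sum_{i=1}^{n}(a_i-b_i)x_i,
\]
which constitutes a saddle representation with $\Psi:=A$, $\Phi:=B$, and $\overline{a^{\phi\psi}}:=\bar{a}-\bar{b}$ where $\psi$ corresponds to $\bar{a}\in A$ and $\phi$ to $\bar{b}\in B$.

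The main obstacle is therefore proving the Key Claim. My plan is to pass to the compact unit sphere $S^{n-1}$, where $h$ restricts to a continuous function, and to exploit the bijective correspondence between continuous sublinear functions on $\R^n$ and support functions of nonempty compact convex subsets of $\R^n$. A natural starting point is the inf-sublinear and sup-superlinear representations $h=\inf_{\phi\in\Phi_0}\phi=\sup_{\psi\in\Psi_0}\psi$ guaranteed by Proposition~\ref{P: DR}, from which I would seek to extract a single pair $(\phi_1,\phi_2)$ realizing the decomposition exactly; the delicate point is collapsing two envelopes into one. An alternative would be to invoke the Demyanov-Rubinov quasidifferential calculus, which supplies DC decompositions directly for Lipschitz positively homogeneous functions, combined with a limiting argument based on uniform approximation of $h|_{S^{n-1}}$ by Lipschitz positively homogeneous functions, with care taken to preserve the DC structure under passage to the limit (allowing, if necessary, unbounded subdifferential sets $A,B$ to accommodate the non-Lipschitz case).
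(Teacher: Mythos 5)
The paper itself offers no proof of this statement: it is imported verbatim from \cite[Theorem~1]{GorTraf}, so there is no internal argument to compare yours against, and your proposal must stand on its own. It does not, because the forward direction rests on a false Key Claim: it is \emph{not} true that every continuous positively homogeneous $h\colon\R^n\to\R$ is a difference $\phi_1-\phi_2$ of real-valued sublinear maps. Any such difference is locally Lipschitz (finite convex functions on $\R^n$ are locally Lipschitz), whereas $h(x_1,x_2)=\sqrt{|x_1x_2|}$ is continuous and positively homogeneous but not Lipschitz near the coordinate axes; and even within the Lipschitz positively homogeneous functions the DC ones form a proper subclass for $n\geq 2$. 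The failure is structural rather than technical: the family $\overline{a^{\phi\psi}}=\bar{a}-\bar{b}$ your reduction produces is \emph{separated} (a function of $\psi$ plus a function of $\phi$), and separated saddle families represent exactly the DC positively homogeneous functions. The entire content of the Gorokhovik--Trafimovich theorem is that general continuous positively homogeneous functions require $\overline{a^{\phi\psi}}$ to depend jointly on both indices. Your fallback via uniform approximation by Lipschitz functions does not rescue this: uniform limits of DC functions need not be DC, and admitting unbounded sets $A,B$ destroys the finiteness and attainment of the inner extrema that the representation requires.

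The correct route --- which this paper itself discloses in the proof of Theorem~\ref{T: saddlerepresentation} --- is to start from the representations $h=\underset{\phi\in\Phi}{\inf}\ \phi=\underset{\psi\in\Psi}{\sup}\ \psi$ of Proposition~\ref{P: DR} and, for \emph{each pair} $(\phi,\psi)$, apply the Hahn--Banach sandwich theorem to the superlinear $\psi$ dominated by the sublinear $\phi$ to obtain a linear $\ell_{\phi\psi}$ with $\psi\leq\ell_{\phi\psi}\leq\phi$; then for each fixed $\phi$ one has $h=\sup_\psi\psi\leq\sup_\psi\ell_{\phi\psi}\leq\phi$, whence $\inf_\phi\sup_\psi\ell_{\phi\psi}=h$, and the dual computation gives $\sup_\psi\inf_\phi\ell_{\phi\psi}=h$. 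I also flag a smaller soft spot in your easy direction: an infimum of extended-real-valued lower semicontinuous convex functions need not be upper semicontinuous, and your proposed restriction to those $\phi$ with finite-valued inner suprema can empty the index set. For instance, with $n=1$, $\Phi:=\{1,2\}$, $\Psi:=[0,\infty)$, and $a^{\phi\psi}:=(-1)^{\phi}\psi$, both orders of $\inf$ and $\sup$ give $h\equiv 0$, yet no inner supremum is finite-valued; so this direction genuinely needs the finiteness of the inner envelopes that is built into the source's formulation rather than recovered after the fact.
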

	
	Next, we provide a saddle representation for functions on a Dedekind complete vector lattice $E$ defined by the functional calculus of Section~\ref{S: h fun cal}. The proof of the following result is an adaptation of the proof of \cite[Theorem~1]{GorTraf}.
	
	\begin{theorem}\label{T: saddlerepresentation}
		Fix $n\in\N$, and assume $E$ is a Dedekind complete vector lattice. Let $h\colon \R^n\to\R$ be a positively homogeneous and continuous function. Suppose $f_1,\dots,f_n\in E$. If 
		\[
		\{\overline{a^{\phi\psi}}\in\R^n\ :\ \phi\in\Phi, \psi\in\Psi\}
		\]
		provides a saddle representation for $h$, that is, if
		\[
		h(\bar{x})=\underset{\phi\in\Phi}{\inf}\ \underset{\psi\in\Psi}{\sup}\sum_{i=1}^{n}a_i^{\phi\psi}x_i=\underset{\psi\in\Psi}{\sup}\ \underset{\phi\in\Phi}{\inf}\sum_{i=1}^{n}a_i^{\phi\psi}x_i\qquad (\bar{x}\in\R^n),
		\]
		then
		\[
		h(f_1,\dots,f_n)=\underset{\phi\in\Phi}{\inf}\ \underset{\psi\in\Psi}{\sup}\sum_{i=1}^{n}a_i^{\phi\psi}f_i=\underset{\psi\in\Psi}{\sup}\ \underset{\phi\in\Phi}{\inf}\sum_{i=1}^{n}a_i^{\phi\psi}f_i.
		\]
	\end{theorem}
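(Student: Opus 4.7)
The plan is to extract from the saddle representation a canonical inf-sublinear and sup-superlinear representation of $h$, reduce the theorem to a sublinear/superlinear identity, and then bridge the gap between the ``saddle directions'' $\{\overline{a^{\phi\psi}}\}$ and the full subdifferential via vector lattice homomorphism separation.

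First, for each $\phi \in \Phi$ define the sublinear map $\tilde{\phi}\colon\R^n\to\R$ by $\tilde{\phi}(\bar{x}) := \sup_{\psi \in \Psi} \sum_{i=1}^n a_i^{\phi\psi} x_i$ (finite-valued by the saddle identity, and sublinear as a pointwise supremum of linear maps), and for each $\psi \in \Psi$ define the superlinear map $\tilde{\psi}(\bar{x}) := \inf_{\phi \in \Phi} \sum_{i=1}^n a_i^{\phi\psi} x_i$. The saddle representation then reads $h(\bar{x}) = \inf_{\phi \in \Phi} \tilde{\phi}(\bar{x}) = \sup_{\psi \in \Psi} \tilde{\psi}(\bar{x})$, and Definition~\ref{D: DC h funcal} immediately yields
\[
h(f_1,\dots,f_n) = \inf_{\phi \in \Phi} \tilde{\phi}(f_1,\dots,f_n) = \sup_{\psi \in \Psi} \tilde{\psi}(f_1,\dots,f_n).
\]

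The crux is the identity $\tilde{\phi}(f_1,\dots,f_n) = \sup_{\psi \in \Psi} \sum_{i=1}^n a_i^{\phi\psi} f_i$ in $E$ for each fixed $\phi \in \Phi$. All the vectors $\overline{a^{\phi\psi}}$ sit inside the compact subdifferential $\underline{\partial}\tilde{\phi}(\bar{0})$, so the coordinate-wise bound used in the proof of Proposition~\ref{P: dc implies phic} shows that $\{\sum a_i^{\phi\psi} f_i : \psi \in \Psi\}$ is order bounded in $E$; call its supremum $S_\phi$. Since each $\overline{a^{\phi\psi}} \in \underline{\partial}\tilde{\phi}(\bar{0})$, we have $S_\phi \leq \tilde{\phi}(f_1,\dots,f_n)$. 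For the reverse inequality, let $E_0 \subseteq E$ be the vector sublattice generated by $\{f_1,\dots,f_n, \tilde{\phi}(f_1,\dots,f_n), S_\phi\}$. Because any upper bound in $E_0$ is an upper bound in $E$, the element $\tilde{\phi}(f_1,\dots,f_n)$ still realizes $\sup_{\bar{a} \in \underline{\partial}\tilde{\phi}(\bar{0})} \sum a_i f_i$ in $E_0$, so $\tilde{\phi}(f_1,\dots,f_n)$ is defined in $E_0$ in the sense of Definition~\ref{D: sblin and splin fun cal}. For any real vector lattice homomorphism $\omega$ on $E_0$, Theorem~\ref{T: phi hom interchange avl} then gives
\[
\omega\bigl(\tilde{\phi}(f_1,\dots,f_n)\bigr) = \tilde{\phi}\bigl(\omega(f_1),\dots,\omega(f_n)\bigr) = \sup_{\psi \in \Psi} \sum_{i=1}^n a_i^{\phi\psi} \omega(f_i),
\]
while positivity of $\omega$ combined with $S_\phi \leq \tilde{\phi}(f_1,\dots,f_n)$ produces the sandwich
\[
\sup_{\psi \in \Psi} \sum_{i=1}^n a_i^{\phi\psi} \omega(f_i) \leq \omega(S_\phi) \leq \omega\bigl(\tilde{\phi}(f_1,\dots,f_n)\bigr).
\]
All three quantities are therefore equal, forcing $\omega(S_\phi) = \omega(\tilde{\phi}(f_1,\dots,f_n))$; since real vector lattice homomorphisms separate the points of $E_0$ (as invoked in Theorem~\ref{T: dc implies hc}), we conclude $S_\phi = \tilde{\phi}(f_1,\dots,f_n)$.

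Taking the infimum over $\phi$ then yields $h(f_1,\dots,f_n) = \inf_\phi \sup_\psi \sum a_i^{\phi\psi} f_i$, and an entirely symmetric argument using $\tilde{\psi}$ delivers the sup--inf identity. The main obstacle is precisely the crux step above: the subdifferential $\underline{\partial}\tilde{\phi}(\bar{0})$ can strictly contain the saddle directions $\{\overline{a^{\phi\psi}}\}_\psi$, so the pointwise equality of scalar suprema does not transfer automatically to a vector lattice equality; the sandwich via vector lattice homomorphisms is what makes this transfer work.
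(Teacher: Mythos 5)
Your proof is correct, and it reaches the conclusion by a genuinely different route than the paper. The paper does not engage with the given saddle family directly: it fixes an inf-sublinear representation $\Phi$ and a sup-superlinear representation $\Psi$ of $h$ (Proposition~\ref{P: DR}) and appeals to the construction inside the proof of \cite[Theorem~1]{GorTraf} for linear maps $\bar{x}\mapsto\sum_{i=1}^{n}a_i^{\phi\psi}x_i$ satisfying $\psi(\bar{x})\leq\sum_{i=1}^{n}a_i^{\phi\psi}x_i\leq\phi(\bar{x})$; it then transfers this three-term sandwich to $E$ via homomorphisms on a finitely generated sublattice and squeezes both iterated expressions between $\sup_{\psi}\psi(f_1,\dots,f_n)=h(f_1,\dots,f_n)$ and $\inf_{\phi}\phi(f_1,\dots,f_n)=h(f_1,\dots,f_n)$. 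You instead manufacture the representations $\tilde{\phi},\tilde{\psi}$ from the saddle family itself and prove the stronger intermediate identity, valid for each fixed $\phi$, that $\sup_{\psi}\sum_{i=1}^{n}a_i^{\phi\psi}f_i=\tilde{\phi}(f_1,\dots,f_n)$ --- i.e.\ that the supremum over the saddle directions already coincides with the supremum over the full subdifferential --- using the same homomorphism-separation device. Your route buys generality: it applies to an arbitrary saddle representation, exactly as the theorem hypothesizes, whereas the paper's argument, read strictly, covers only the saddle family arising from the Gorokhovik--Trafimovich construction, for which the sandwich property is available by construction. The paper's route is shorter because it never has to compare the saddle directions with the subdifferential. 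Two small points you should make explicit: the reduction $h(f_1,\dots,f_n)=\inf_{\phi}\tilde{\phi}(f_1,\dots,f_n)=\sup_{\psi}\tilde{\psi}(f_1,\dots,f_n)$ rests on Propositions~\ref{P: welldefined} and~\ref{P: cts well defined}, not on Definition~\ref{D: DC h funcal} alone; and the finiteness of each $\tilde{\phi}$ and $\tilde{\psi}$ (needed for them to be real-valued sublinear and superlinear maps, and for the order bound borrowed from Proposition~\ref{P: dc implies phic}) is implicit in the definition of a saddle representation rather than a formal consequence of the two displayed scalar identities alone.
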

	
	\begin{proof}
		Since $h$ is continuous, we know $h$ is both upper and lower semicontinuous as well as bounded on the unit sphere. Let $\Phi$, respectively $\Psi$, be a family of sublinear, respectively superlinear maps, which satisfies \eqref{eq: inf-sublinear rep}, respectively \eqref{eq: sup-superlinear rep}. From the proof of \cite[Theorem~1]{GorTraf}, we have for any $\phi\in\Phi$ and $\psi\in\Psi$ that there exists a linear map $(\bar{x})\mapsto\sum_{i=1}^{n}a_i^{\phi\psi}x_i$ for which
		\[
		\psi(\bar{x})\leq\sum_{i=1}^{n}a_i^{\phi\psi}x_i\leq \phi(\bar{x})\qquad (\bar{x}\in\R^n).
		\]
		Fix $\phi_0\in\Phi$ and $\psi_0\in\Psi$, and let $E_0$ be the vector sublattice of $E$ generated by
		\[
		\{f_1,\dots,f_n, \phi_0(f_1,\dots,f_n), \psi_0(f_1,\dots,f_n)\},
		\]
		and let $\omega\colon E_0\to\R$ be a vector lattice homomorphism. Utilizing Proposition~\ref{T: phi hom interchange} in each of the equalities below, we have
		\begin{align*}
			\omega\bigl(\psi_0(f_1,\dots,f_n)\bigr)&=\psi_0\bigl(\omega(f_1),\dots,\omega(f_n)\bigr)\\
			&\leq\sum_{i=1}^{n}a_i^{\phi_0\psi_0}\omega(f_i)\\
			&\leq \phi_0\bigl(\omega(f_1),\dots,\omega(f_n)\bigr)\\
			&=\omega\bigl(\phi_0(f_1,\dots,f_n)\bigr).
		\end{align*}
		Noting that $\sum_{i=1}^{n}a_i^{\phi_0\psi_0}\omega(f_i)=\omega\left(\sum_{i=1}^{n}a_i^{\phi_0\psi_0}f_i\right)$, we obtain
		\[
		\omega\bigl(\psi_0(f_1,\dots,f_n)\bigr)\leq\omega\left(\sum_{i=1}^{n}a_i^{\phi_0\psi_0}f_i\right)\leq\omega\bigl(\phi_0(f_1,\dots,f_n)\bigr).
		\]
		Since the set of all nonzero vector lattice homomorphisms $\omega\colon E_0\to\R$ separate the points of $E_0$ \cite[remark 1.2(ii), Theorem~2.2]{BusvR}, we get that
		\begin{equation}\label{eq: compound ineq}
		\psi_0(f_1,\dots,f_n)\leq\sum_{i=1}^{n}a_i^{\phi_0\psi_0}f_i\leq \phi_0(f_1,\dots,f_n).
		\end{equation}
		Taking the infimum over all $\phi\in\Phi$ in \eqref{eq: compound ineq}, which is possible since $E$ is Dedekind complete, we obtain
		\[
		\psi_0(f_1,\dots,f_n)\leq \underset{\phi\in\Phi}{\inf}\ \sum_{i=1}^{n}a_i^{\phi\psi_0}f_i\leq \underset{\phi\in\Phi}{\inf}\ \phi(f_1,\dots,f_n)=h(f_1,\dots,f_n).
		\]
		Next, taking the supremum over all $\psi\in\Psi$ (which is again possible due to the assumption that $E$ is Dedekind complete), we get
		\begin{align*}
			h(f_1,\dots,f_n)&=\underset{\psi_\in\Psi}{\sup}\ \psi(f_1,\dots,f_n)\\
			&\leq\underset{\psi\in\Psi}{\sup}\ \underset{\phi\in\Phi}{\inf}\ \sum_{i=1}^{n}a_i^{\phi\psi}f_i\\
			&\leq \underset{\phi\in\Phi}{\inf}\ \phi(f_1,\dots,f_n)\\
			&=h(f_1,\dots,f_n).
		\end{align*}
		Therefore, we have
		\[
		h(f_1,\dots,f_n)=\underset{\psi\in\Psi}{\sup}\ \underset{\phi\in\Phi}{\inf}\ \sum_{i=1}^{n}a_i^{\phi\psi}f_i.
		\]
		Repeating this argument, except taking the supremum over all $\psi\in\Psi$ first and then taking the infimum over all $\phi\in\Phi$ second, we get
		\[
		h(f_1,\dots,f_n)=\underset{\phi\in\Phi}{\inf}\ \underset{\psi\in\Psi}{\sup}\ \sum_{i=1}^{n}a_i^{\phi\psi}f_i
		\]
		as well.
	\end{proof}
	
	In conclusion of this section, Theorem~\ref{T: saddlerepresentation} shows that saddle representations provide perhaps simpler formulas than Definition~\ref{D: DC h funcal} for continuous positively homogeneous defined via our functional calculus in the setting of Dedekind complete vector lattices.
	
	\section{Examples}\label{S: examples}
	
	We conclude this paper by providing some examples of how the semicontinuous Archimedean vector lattice functional calculus can be exploited to define some semicontinuous functions on Archimedean vector lattices.
	
	\begin{example}\label{E: ex1}
		Define $h\colon\R^2\to\R$ by
		\[
		h(x,y):=\begin{cases}
			x+y & \text{if}\ x,y\geq 0\\
			0 & \text{else}
		\end{cases}.
		\]
		Observe that $h$ is upper semicontinuous, positively homogeneous, and bounded on the unit sphere. However, for every $x>0$, $h$ is discontinuous at $(x,0)$. Moreover, $h$ is also discontinuous at $(0,y)$ for all $y>0$. Hence $h$ is outside the scope of the continuous Archimedean vector lattice functional calculus of \cite{BusdPvR}.
		
		However, by providing an inf-sublinear representation for $h$, we can utilize our functional calculus to define $h$ in any $h$-closed Archimedean vector lattice. To this end,
		for every $m,n\in\N$ and all $(x,y)\in\R^2$, define the sublinear map
		\[
		\phi_{m,n}(x,y):=\max\{mx+ny,0\}.
		\]
		Note that if $x,y\geq 0$, then
		\[
		\underset{m,n\in\N}{\inf}\ \max\{mx+ny,0\}=x+y=h(x,y).
		\]
		If $x,y<0$ however, then
		\[
		\underset{m,n\in\N}{\inf}\ \max\{mx+ny,0\}=0=h(x,y).
		\]
		Next assume that one of $x$ and $y$ is negative and the other is zero. Without loss of generality, say $x<0=y$. Then
		\[
		\underset{m,n\in\N}{\inf}\ \max\{mx+ny,0\}=\underset{m,n\in\N}{\inf}\ \max\{mx+n\cdot0,0\}=0=h(x,y).
		\]
		Finally, suppose that one of $x$ and $y$ is negative and the other is positive. Without loss of generality, we assume $x<0<y$. Then there exists $m_0\in\N$ such that $m_0(-x)>y$, that is, $m_0x+y<0$. We thus obtain
		\[
		\underset{m,n\in\N}{\inf}\ \max\{mx+ny,0\}\leq\max\{m_0x+y,0\}=0=h(x,y).
		\]
		Therefore,
		\[
		h(x,y)=\underset{m,n\in\N}{\inf}\ \max\{mx+ny,0\}=\underset{m,n\in\N}{\inf}\ \bigl((mx+ny)^+\bigr)\qquad \bigl((x,y)\in\R^2\bigr)
		\]
		is an inf-sublinear representation of $h$. Hence we can define $h$ on the two-fold Cartesian product of any $h$-complete Archimedean vector lattice $E$ by
		\[
		h(f,g):=\underset{m,n\in\N}{\inf}\ \bigl((mf+ng)^+\bigr)\qquad (f,g\in E).
		\]
	\end{example}
	
	\begin{example}\label{E: ex2}
		Define $h\colon\R^2\to\R$ by
		\[
		h(x,y):=\begin{cases}
			x & \text{if}\ x,y>0\\
			y & \text{if}\ y<0\\
			0 & \text{else}
		\end{cases}.
		\]
	\end{example}
	Note that $h$ is positively homogeneous, lower semicontinuous (but not continuous), and bounded on the unit sphere.
	
	Next define for every $\lambda\in\{0,1\}$ and all $n\in\N$ the superlinear map
	\[
	\psi_{\lambda, n}(x,y)=\min\{\lambda x, ny\}.
	\]
	Suppose $x,y>0$. Then we get
	\[
	\underset{\lambda\in\{0,1\}, n\in\N}{\sup}\ \min\{\lambda x, ny\}=\underset{n\in\N}{\sup}\ \min\{x,ny\}=x=h(x,y).
	\]
	On the other hand, if $x,y<0$, we have
	\[
	\underset{\lambda\in\{0,1\}, n\in\N}{\sup}\ \min\{\lambda x, ny\}=\underset{n\in\N}{\sup}\ \min\{0, ny\}=y=h(x,y),
	\]
	and if $y<0\leq x$, we get
	\[
	\underset{\lambda\in\{0,1\}, n\in\N}{\sup}\ \min\{\lambda x, ny\}=\underset{n\in\N}{\sup}\ ny =y=h(x,y).
	\]
	In the case that $x\leq 0<y$, we have
	\[
	\underset{\lambda\in\{0,1\}, n\in\N}{\sup}\ \min\{\lambda x, ny\}=\underset{\lambda\in\{0,1\}}{\sup}\ \lambda x=0=h(x,y),
	\]
	and finally, if $y=0$, we obtain
	\[
	\underset{\lambda\in\{0,1\}, n\in\N}{\sup}\ \min\{\lambda x, ny\}=\underset{\lambda\in\{0,1\}}{\sup}\ \min\{\lambda x, 0\}=0=h(x,y).
	\]
	Hence
	\[
	h(x,y)=\underset{\lambda\in\{0,1\}, n\in\N}{\sup}\ \min\{\lambda x, ny\}=\underset{\lambda\in\{0,1\}, n\in\N}{\sup}\ \bigl((\lambda x)\wedge(ny)\bigr)\qquad \bigl((x,y)\in\R^2\bigr)
	\]
	is a sup-superlinear representation of $h$. We conclude that $h$ can be defined on any Cartesian square of an $h$-complete Archimedean vector lattice $E$ by
	\[
	h(f,g):=\underset{\lambda\in\{0,1\}, n\in\N}{\sup}\ \bigl((\lambda f)\wedge(ng)\bigr)\qquad (f,g\in E).
	\]

\end{document}